\documentclass[12pt]{article} 

\usepackage{graphicx}
\usepackage{amsmath,amssymb,textcomp,graphicx,amsthm}  
\usepackage{epsfig}
\usepackage{color}
\usepackage[english]{babel} 
\usepackage[alphabetic]{amsrefs}
 \usepackage{verbatim}
\usepackage[latin1]{inputenc}  
\usepackage[allcolors = blue, colorlinks = true]{hyperref}
\usepackage{enumerate}

\numberwithin{equation}{section}

\newcommand{\e}{\varepsilon}
\newcommand{\eps}{\varepsilon}
\newcommand{\<}{\langle}
\renewcommand{\>}{\rangle}

\newtheorem{thm}{Theorem}
\newtheorem{de}[thm]{Definition}
\newtheorem{lem}[thm]{Lemma}
\newtheorem{prop}[thm]{Proposition}
\newtheorem{cor}[thm]{Corollary}

\newcommand{\dd}{\partial}
\renewcommand{\d}{\hspace{0.05em}\textup{d}}

\newcommand{\dist}{\operatorname{dist}}

\renewcommand{\div}{\operatorname{div}}

\newcommand{\ont}{\int\limits_0^T\!\!\!\int\limits_\Omega}

\newcommand{\Om}{\Omega}
\newcommand{\om}{\omega}

\title{Nonnegative Viscosity and Weak  Solutions to Trudinger's Parabolic Equation }

\author{Peter Lindqvist, Mikko Parviainen, Jarkko Siltakoski}
\date{\phantom{abc}}
\begin{document}

\maketitle

\begin{abstract}
\noindent We study the relation between viscosity solutions and weak solutions of the equation 
$$
\dd_t\left(|u|^{p-2}u\right)\,=\, \div(|\nabla u|^{p-2}\nabla u).
$$ We prove that at least for non-negative functions these are equivalent concepts.
\end{abstract}

\bigskip 

{\small \textsf{AMS Classification 2020}: 35D40, 35D30, 35K65, 35K92} 

{\small \textsf{Keywords}: Trudinger's Equation, Viscosity Solutions, Weak Solutions, Barriers}

\section{Introduction}
We consider the doubly non-linear equation
\begin{equation} \label{trudinger}
\frac{\dd\,(|u|^{p-2}u)}{\dd t}\,=\, \div (|\nabla u|^{p-2}\nabla u)   \quad \text{in }\quad \Omega\times (0,T).
\end{equation}
N. Trudinger introduced the equation in \cite{T} mainly to illustrate a Harnack Inequality of the same type as for the ordinary Heat Equation. The latter corresponds to $p=2$. No intrinsic scaling is needed. The equation appears in a much studied \emph{non-linear eigenvalue problem}: if $\psi$ is a (weak) solution to
\begin{equation*}\begin{cases}
    \Delta_p\psi + \lambda|\psi|^{p-2}\psi\,=\,0\quad \text{in}\quad \Omega\\
    \psi \in W^{1,p}_0(\Omega),
\end{cases}\end{equation*}
where $\Delta_p\psi = \nabla \cdot \bigl(|\nabla \psi|^{p-2}\nabla \psi \bigr)$,
then
$$u(x,t)\,=\,e^{-\lambda t/(p-1)}\psi(x)$$
is a solution of the Trudinger Equation. See \cite{L}.

    The theory for sign-changing solutions is less developed than that for the Evolutionary $p$-Laplace Equation
    \begin{equation} \label{pparab}
    \frac{\partial u}{\partial t} \,=\, \div(|\nabla u|^{p-2}\nabla u).
\end{equation}
A decisive difference between the equations (\ref{trudinger}) and (\ref{pparab}) is the degeneracy when $u=0$.
The zeros are of essence even for non-negative solutions of the Trudinger Equation. They are at the fore of our work.

We aim at investigating the relationship between two different concepts of solutions:
\begin{itemize}
\item \textsf{Weak Solutions.} By definition, they belong to the Sobolev space\\ $L^p_{loc}\bigl(0,T;W^{1,p}_{loc}(\Omega)\bigr)$ and satisfy an integral identity with test functions under the integral sign. (Definition \ref{solution})
  \item \textsf{Viscosity Solutions.} They are continuous functions which are tested pointwise with touching test functions. (Definition \ref{viscdef})
\end{itemize}
Our exposition is written only for the case $p>2$.  Our main result is for non-negative viscosity solutions. Thus zeros are allowed.

\begin{thm}\label{thmI} Let $p \geq 2.$ A non-negative viscosity solution in $\Omega_T$ is also a weak solution and as such it belongs to $L^p_{loc}\bigl(0,T;W^{1,p}_{loc}(\Omega)\bigr).$
  \end{thm}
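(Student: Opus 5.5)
The plan is to follow the strategy of Juutinen--Lindqvist--Manfredi for the evolutionary $p$-Laplacian: identify viscosity supersolutions with $p$-superparabolic-type functions (semicontinuous functions obeying the comparison principle with respect to weak solutions), and then show that a continuous function that is simultaneously of this super- and sub-type in a parabolic box coincides with the weak solution of a boundary value problem. So I fix a space-time cylinder $Q=D\times(t_1,t_2)\Subset\Omega_T$ with $D$ smooth, say a ball. On $Q$ I take the continuous non-negative viscosity solution $u$. The aim is to produce a weak solution $h$ in $Q$, continuous up to the parabolic boundary, with $h=u$ on $\partial_pQ$, and then show $h=u$ in $Q$. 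Since $h$ belongs to $L^p(t_1,t_2;W^{1,p}(D))$ and satisfies the integral identity, covering $\Omega_T$ by such cylinders gives the theorem.

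\textbf{Step 1: Solvability of the Dirichlet problem.} I would obtain $h$ by approximation. First I regularize the boundary data $u|_{\partial_pQ}$ into smooth data $g_k\ge 1/k$ with $g_k\to u$ uniformly. With strictly positive data the equation is non-degenerate in the time derivative: the substitution $v=|u|^{p-2}u$ gives a doubly nonlinear equation for which existence of weak solutions is classical (Alt--Luckhaus, Ivanov). Next I need the approximants $h_k$ to attain the boundary values continuously and uniformly in $k$. For this I would construct explicit barriers at the lateral and initial boundary. Spatially, suitable radial functions of $|x-x_0|$ work well for the $p$-Laplacian on balls. In time, one can use separable solutions $e^{-\lambda t/(p-1)}\psi(x)$ or, more simply, functions of the form $a+b(t-t_0)$ combined with spatial barriers. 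This is possible because the Trudinger equation is homogeneous of degree $p-1$, so multiplying a solution by a constant gives a solution. Interior equicontinuity of $h_k$ comes from the known Hölder regularity for non-negative solutions of Trudinger's equation (Kuusi, Siljander, Urbano). Energy estimates give $L^p(W^{1,p})$ bounds, and weak-limit arguments, using the monotonicity of $|\nabla u|^{p-2}\nabla u$ and the strong convergence of $h_k$, let me pass to the limit to get $h$.

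\textbf{Step 2: Comparison $u=h$.} I would show that the weak solution $h$, being continuous, is also a viscosity solution. This direction is comparatively easy: one argues by contradiction with a touching test function and uses the weak comparison principle in a small cylinder. To conclude $u=h$ I would then use a comparison principle for viscosity sub- and supersolutions in $Q$ with the same boundary values. The direct doubling-of-variables proof is problematic at $u=0$, because $\partial_t(u^{p-1})=(p-1)u^{p-2}u_t$ degenerates there. I would instead exploit homogeneity and strict positivity of the approximants. To show $u\le h$, I compare $u$ with the weak solution $h^{+}_k$ having boundary data $g_k+1/k>u$. Since $h^+_k\ge 1/k$ by weak comparison, it is a positive solution, and there the equation can be written as $(p-1)h^{p-2}h_t=\Delta_p h$ with bounded-below coefficient. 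One then proves $u\le h^+_k$ by the viscosity argument: assume the maximum of $u-h^+_k$ is positive, perturb $h^+_k$ to $(1+\delta)h^+_k$ or add $\delta/(t_2-t)$ to get a strict supersolution, and derive a contradiction at the touching point. That step uses Jensen--Ishii-type arguments in which positivity of the smaller-in-comparison function controls the time term. Symmetrically, I show $u\ge h^-_k$ with data $g_k-1/k$. This time the function $u$ itself may vanish, which is exactly where $u\ge0$ and the zeros enter; a barrier $h^-_k\le (g_k-1/k)^+$ together with $u\ge0$ handles the region where $h^-_k$ is small. Letting $k\to\infty$ gives $u=h$.

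The \textbf{main obstacle} is this comparison near the zero set of $u$ in Step 2. I expect it to require a careful treatment of test functions at points where $u=0$ (where the definition presumably relaxes the time derivative), and possibly a change of variables $u=w^{q}$ to remove the degeneracy. The barrier construction in Step 1 is secondary but technical.
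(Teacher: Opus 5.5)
\textbf{There is a genuine gap in Step~2}, in the half $u\ge h^-_k$. There the viscosity \emph{supersolution} is the function $u$, which may vanish. The critical contact points are those with $u(\hat x,\hat t)=0<h^-_k(\hat x,\hat t)$, and nothing in your plan handles them.

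If $\phi$ touches $u$ from below at such a point, then $\phi(\hat x,\hat t)=0$. For $p>2$ the term $(p-1)|\phi|^{p-2}\partial_t\phi$ then drops out, and the definition only gives $\Delta_p\phi(\hat x,\hat t)\le 0$. For a touching function modelled on $h^-_k-c$ this contradicts nothing, because $\Delta_p h^-_k$ has no sign.

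Doubling variables fails for the same reason. Take the common time derivative $a$ and the ordered matrices from Ishii's lemma, and make $h^-_k$ a strict subsolution with slack $\kappa>0$. You then only get
\[
(p-1)\bigl(h^-_k(x,t)^{p-2}-u(y,s)^{p-2}\bigr)\,a\,\le\,-\kappa ,
\]
which means $a<0$, not a contradiction. The substitution $w=\log u$ turns the equation into the proper equation $(p-1)w_t=\Delta_pw+(p-1)|\nabla w|^p$, and that is why comparison works for strictly positive functions. It breaks down exactly on $\{u=0\}$.

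Your suggested fix, a bound for $h^-_k$ where it is small, looks at the wrong region: the trouble is where $u$ vanishes and $h^-_k$ does not. So Step~2 presupposes a comparison principle for non-negative viscosity supersolutions with zeros. That principle is not available, and it is essentially what the theorem is meant to deliver.

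The other half, $u\le h^+_k$, is fine: you may replace $u$ by $\max\{u,\varepsilon\}$ and use the strictly positive theory. Separately, the limit $k\to\infty$ needs uniqueness for the Dirichlet problem with possibly vanishing data. The gap comparison alone does not supply this.

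\textbf{How the paper avoids the problem.} It proves no such comparison and solves no boundary value problem.
\begin{enumerate}
\item The subsolution half is exactly the $\max\{u,\varepsilon\}$ observation, combined with the energy estimate of Lemma~\ref{subenergy} to let $\varepsilon\to0$.
\item For the supersolution half, the missing idea is to determine the zero set first.
\begin{enumerate}
\item The explicit barrier $\omega$ of Lemma~\ref{omega} has $\Delta_p\omega>0$. This is precisely what forces $v(\hat x,\hat t)>0$ at the contact point in Lemma~\ref{xexpansion}, so positivity spreads in $x$ (Corollary~\ref{horizontal}).
\item The eigenfunction barrier, compared with $v$ through the gap comparison inside the positivity set, spreads positivity forward in time (Lemma~\ref{time}).
\end{enumerate}
\item Thus, unless $v\equiv0$, $\{v=0\}=\Omega\times(0,t_0]$. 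Then $v$ is a weak supersolution in $\Omega\times(t_0,T)$ by the strictly positive theory.
\item A time cut-off (Corollary~\ref{past}) carries this across $t=t_0$, since $v(\cdot,t_0)=0$.
\end{enumerate}
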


  \medskip
  This relies heavily on our recent result in \cite{LPS} where we proved the theorem for \emph{strictly} positive viscosity solutions. The introduced zero set requires some sophisticated arrangements with so-called barriers. In the opposite direction, we have a result, valid even for sign-changing weak solutions.

  \begin{thm}\label{thmII} Let $p\geq 2.$ A weak solution in $\Omega_T$ is also a viscosity solution, be it sign-changing or not.
  \end{thm}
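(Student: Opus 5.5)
The plan is the standard comparison argument of Juutinen--Lindqvist--Manfredi: show that a weak solution is a viscosity supersolution, the subsolution case being symmetric since $u\mapsto -u$ preserves the equation. First I will check that a weak solution is continuous, which is needed to speak of viscosity solutions at all. For this I would use the known local H\"older continuity of weak solutions of Trudinger's equation for $p\ge 2$, valid also for sign-changing solutions. Next I will establish a comparison principle for weak sub- and supersolutions on space-time cylinders $Q=B\times(t_1,t_2)$. Thus, if $v$ is a weak supersolution and $w$ a weak subsolution in $Q$, continuous on $\overline Q$, with $v\ge w$ on the parabolic boundary, then $v\ge w$ in $Q$. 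Because of the doubly nonlinear time term, I would prove this with test functions built from $(w-v)_+$ and a time mollification (Steklov averages). The monotonicity of $s\mapsto |s|^{p-2}s$ should give an $L^1$-type contraction; alternatively I could adapt Otto's $L^1$-contraction argument for doubly nonlinear equations.

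Suppose now that $u$ is a weak solution but not a viscosity supersolution. Then there is a point $(x_0,t_0)$ and a smooth $\varphi$ touching $u$ from below at $(x_0,t_0)$, with $\varphi<u$ elsewhere near the point, such that the supersolution inequality fails. Away from the degenerate points this failure means
\[
\partial_t\bigl(|\varphi|^{p-2}\varphi\bigr)(x_0,t_0) < \Delta_p\varphi(x_0,t_0).
\]
In the definition of viscosity solutions (Definition \ref{viscdef}) this presumably appears as $(p-1)|\varphi|^{p-2}\varphi_t<\Delta_p\varphi$. At points with $\nabla\varphi=0$ or $\varphi=0$, the definition restricts the admissible tests, and I will handle those cases according to its precise form. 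By continuity the strict inequality persists in a small cylinder $Q_r$ around the point, so $\varphi$ is a classical, hence weak, strict subsolution there. Perturbing, let $\tilde\varphi=\varphi+\delta$ with small $\delta>0$. Since $\varphi<u$ strictly on the parabolic boundary of $Q_r$, we have $\tilde\varphi\le u$ there, and $\tilde\varphi$ is still a subsolution. For the Trudinger equation the added constant changes the time term, since $|\varphi+\delta|^{p-2}$ differs from $|\varphi|^{p-2}$, but the strict inequality absorbs this for small $\delta$. The comparison principle then gives $\tilde\varphi\le u$ in $Q_r$, contradicting $\tilde\varphi(x_0,t_0)=u(x_0,t_0)+\delta$.

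I expect the main obstacle to be the degenerate cases, where $\varphi(x_0,t_0)=0$ or $\nabla\varphi(x_0,t_0)=0$. There the pointwise equation does not yield a strict inequality in a neighbourhood. For vanishing gradient with $p\ge 2$, the operator $\Delta_p\varphi$ is continuous and vanishes where $\nabla\varphi=0$, so the usual trick applies: replace $\varphi$ by test functions with a controlled spatial profile, such as $|x-x_0|^{q}$ with $q>p/(p-1)$, or use the restricted class of admissible tests in the definition. At $\varphi=0$ the time factor $|\varphi|^{p-2}$ vanishes. In that case I would use a modified, though still strict, subsolution, for example $\varphi-\epsilon(|x-x_0|^q+(t-t_0)^2)$ plus a small constant. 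The subsolution property in the small cylinder then comes from the spatial term alone, which is feasible since the failure there must be $\Delta_p\varphi(x_0,t_0)>0$. Verifying the comparison principle rigorously for sign-changing weak solutions, with only the regularity $|u|^{p-2}u$ having a weak time derivative in a suitable dual sense, is the other delicate technical step.
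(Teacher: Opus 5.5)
Your proposal is essentially the paper's proof: a test function that strictly fails the inequality at the touching point, persistence of that strict inequality nearby, a vertical lift $\varphi+\delta$ whose time-term error is absorbed (the paper bounds it via $\bigl||\phi+\delta|^{p-2}-|\phi|^{p-2}\bigr|\le C_p\delta^{\min\{1,p-2\}}$), and comparison on a small cylinder, where only the gap version with $\min_{\partial_p}(u-\tilde\varphi)>0$ is needed, and the paper cites that version from the literature rather than proving it. The degenerate cases you single out as the main obstacle do not arise: for $p\ge 2$ and $\varphi\in C^2$, the map $(x,t)\mapsto(p-1)|\varphi|^{p-2}\varphi_t-\Delta_p\varphi$ is continuous even where $\varphi=0$ or $\nabla\varphi=0$, so the strict inequality persists automatically and no modified test functions are required.
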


  \medskip
  The proof is based on a recently achieved version of the Comparison Principle, cf. \cite{ALP}, which fortunately is sufficient for our purpose.
For non-negative weak solutions, Theorem \ref{thmII} was proved by Lindgren and Lindqvist in \cite[Theorem~3]{LL}, and for strictly positive solutions the equivalence of weak and viscosity solutions was proven in \cite[Theorem~2.9]{LPS}. Bhattacharya and Marazzi \cite{BM} proved for example a comparison principle for strictly positive viscosity solutions. Viscosity solutions of a different doubly nonlinear equation were studied by Hynd and Lindgren in \cite{HL}.

\section {Definitions and Known Properties}

We use standard notation. Let $\Omega_T\,=\,\Omega \times (0,T)$, where $\Omega \subset \mathbb{R}^n$ is an open bounded domain with Lipschitz regular \emph{parabolic boundary}
$$\dd_{p}\Omega_T\,=\, (\overline{\Omega} \times \{0\})\,\cup\,(\dd \Omega \times [0,T]).$$

Occasionally, we employ the \emph{Stekloff average}
$$[f(x,t)]_h\,=\, \frac{1}{h}\int\limits_{t-h}^{t}f(x,\tau)\,\d \tau,\qquad h > 0,$$
for regularizing the time derivative.

\begin{de}\label{solution} We say that $u \in L_{loc}^p(0,T;W_{loc}^{1,p}(\Omega))$ is a \emph{weak subsolution}   in $\Omega_T$ if
  \begin{equation}\label{weaksub}\ont\left(-|u|^{p-2}u\,\phi_t\,+\,\<|\nabla u|^{p-2}\nabla u,\nabla \phi\> \right)\d x\d t\,\,\leq\,\,0\end{equation}
  for all $\phi \in C_0^{\infty}(\Omega_T),\ \phi \geq 0.$
  
 We say that $v \in L_{loc}^p(0,T;W_{loc}^{1,p}(\Omega))$ is a \emph{weak supersolution}   in $\Omega_T$ if
  \begin{equation}\label{weaksuper}\ont\left(-|v|^{p-2}v\,\phi_t\,+\,\<|\nabla v|^{p-2}\nabla v,\nabla \phi\> \right)\d x\d t\,\,\geq\,\,0\end{equation}
 for all $\phi \in C_0^{\infty}(\Omega_T), \ \phi \geq 0.$

 A \emph{weak solution} is both a  weak subsolution and supersolution.
\end{de}
Weak solutions have locally H\"{o}lder continuous representatives by \cite{BDL}. Weak supersolutions which are locally essentially bounded below admit lower semicontinuous representatives, cf.\ \cite[Theorem~2.3]{Liao}. In this work, we treat only \emph{continuous} super- and subsolutions. That is not a serious restriction.
For test functions $\zeta\,\geq\,0$ vanishing on $\partial\Omega \times[t_1,t_2]$, but not necessarily at the times $t_1$ and $t_2$,
the weak formulation gives
\begin{equation}\label{zetaeq}
   -\bigg\vert_{t_1}^{t_2}\int\limits_{\Omega} \zeta |u|^{p-2}u \d x \, \,\geq\,
    \int\limits_{t_1}^{t_2}\!\!\!\int\limits_\Omega\left(-|u|^{p-2}u\,\zeta_t+\<|\nabla u|^{p-2}\nabla u,\nabla \zeta\> \right)\d x\d t
\end{equation}
for a weak subsolution $u \in L_{loc}^p(0,T;W^{1,p}(\Omega)),\,\,\, 0 < t_1 < t_2 < T,$ and the above notation means substitution at times $t_1$ and $t_2$.

A continuous weak solution obeys the \emph{Maximum Principle}, cf.\ \cite[Section~2]{LPSarsa}:
$$\min_{\dd_p \Omega_T} \{u\}\,\leq\,u\,\leq \,\max_{\dd_p \Omega_T}\{u\}\quad \text{if}\quad u \in C(\overline{\Omega_T}).$$

In this paper, we prove directly by barriers that a non-negative
viscosity supersolution $u$ in $\Omega_T$ which vanishes at an
interior point $(x_0,t_0)$ satisfies
$$u(x,t)\,=\,0\quad\text{in}\quad\Omega\times(0,t_0).$$
This property is a step in our proof of the equivalence of weak
and viscosity solutions. A similar property is already known for the weak supersolution. The corresponding \emph{strong minimum
principle} is not generally valid at nonzero minimum levels.
Zero has a special status!

We need the following special version of the Comparison Principle, valid when there is a gap on the parabolic boundary $\partial_p\Omega_T$  between the functions.
\begin{thm}[\textsf{Gap comparison}] \label{gap} Let $p \geq 2$. Suppose that $u,v \in C(\overline{\Omega}_T)$ and
\[
u,v \in L^p(0,T;W^{1,p}(\Omega)).
\]
If $v$ is a weak supersolution and $u$ a weak subsolution such that $$\min_{\partial_p \Omega_T}\,\{v-u\}\, > \, 0,$$
  then $v\geq u$ in $\Omega_T$.
\end{thm}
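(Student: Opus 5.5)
The plan is to prove the gap comparison by a duality-type energy argument adapted to the doubly nonlinear structure. The natural test function is a truncation of $u-v$, but because the time derivative acts on $|u|^{p-2}u$ rather than on $u$, the standard choice $(u-v)_+$ does not directly produce a monotone time term. We therefore compare $|u|^{p-2}u$ and $|v|^{p-2}v$ through the monotone map $s\mapsto |s|^{p-2}s$. The gap $\min_{\partial_p\Omega_T}\{v-u\}=\delta>0$ is exploited first. By uniform continuity on $\overline{\Omega}_T$, the set $\{u\ge v\}$ stays at positive distance from $\partial_p\Omega_T$. Hence for small $\varepsilon>0$ the function $(u-v-\varepsilon)_+$, or any truncation of $u-v$ at a positive level below $\delta$, vanishes in a neighbourhood of $\partial\Omega\times[0,T]$ and near $t=0$. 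This makes it an admissible test function after Stekloff regularization, using \eqref{zetaeq} with $t_1$ close to $0$ and arbitrary $t_2$.

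The key steps, in order, are as follows.

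First, write the Stekloff-averaged weak inequalities for $u$ (subsolution) and $v$ (supersolution) and subtract them. Test with $\phi=\eta(t)\,(u-v-\varepsilon)_+$, where $\eta$ is a cutoff in time up to $t_2$. Letting $h\to0$ yields
\[
\int\limits_0^{t_2}\!\!\int\limits_\Omega \partial_t\bigl(|u|^{p-2}u-|v|^{p-2}v\bigr)(u-v-\varepsilon)_+\,\d x\d t
+\int\limits_0^{t_2}\!\!\int\limits_{\{u>v+\varepsilon\}}\<|\nabla u|^{p-2}\nabla u-|\nabla v|^{p-2}\nabla v,\nabla u-\nabla v\>\,\d x\d t\le 0,
\]
interpreted in the weak sense.

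Second, by monotonicity of the vector field the gradient term is nonnegative.

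Third, handle the time term, which is the real obstacle. The difficulty is that $w=u-v$ is not a function of $U-V$, where $U=|u|^{p-2}u$ and $V=|v|^{p-2}v$. Thus $\int\partial_t(U-V)(u-v-\varepsilon)_+$ is not an exact time derivative. Formally, the integration by parts $-\int (U-V)\,\partial_t(u-v)_+$ requires time regularity of $u,v$ that we lack.

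For the third step I would first try the alternative test function $\phi=\bigl(U-V-\varepsilon'\bigr)_+$ truncated in the $U$-variables. This makes the time term exactly $\tfrac12\partial_t\int (U-V-\varepsilon')_+^2$, which vanishes initially by the gap and hence gives a nonnegative boundary contribution at $t_2$. The price is that the gradient of this test function is
\[
(p-1)\bigl(|u|^{p-2}\nabla u-|v|^{p-2}\nabla v\bigr).
\]
The elliptic term then becomes $(p-1)\int\<|\nabla u|^{p-2}\nabla u-|\nabla v|^{p-2}\nabla v,\,|u|^{p-2}\nabla u-|v|^{p-2}\nabla v\>$, which is not sign-definite near the zero set. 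Here the gap enters a second time, together with continuity. On a thin set where $U>V+\varepsilon'$ with both values small, we would rather use a hybrid: test with $\bigl(\Phi(u)-\Phi(v)\bigr)$ for a suitable change of variables $\Phi$, or use the trick of \cite{ALP} of comparing $u$ with a slightly scaled and shifted $v$. For instance, replace $v$ by $v_\lambda=\lambda v+(1-\lambda)$-type perturbations that remain supersolutions by homogeneity ($\lambda v$ is a supersolution for $\lambda>0$, since the equation is $(p-1)$-homogeneous). Then choose $\lambda>1$ close to $1$, preserving the gap, so that the cross terms are absorbed.

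I would cite the comparison argument of \cite{ALP} for the precise absorption estimate. The conclusion is that $(U-V-\varepsilon')_+$, or its analogue for the perturbed pair, vanishes for all $t_2$, and letting the parameters tend to zero gives $v\ge u$.
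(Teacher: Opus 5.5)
\textbf{There is a genuine gap.} You correctly isolate the one real difficulty, the time term, but you never resolve it, so as written the proposal does not prove the theorem.

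With the test function $(u-v-\varepsilon)_+$, the pairing of $\partial_t\bigl(|u|^{p-2}u-|v|^{p-2}v\bigr)$ with $(u-v-\varepsilon)_+$ is not an exact time derivative. With $(U-V-\varepsilon')_+$, the elliptic term $(p-1)\int\langle |\nabla u|^{p-2}\nabla u-|\nabla v|^{p-2}\nabla v,\,|u|^{p-2}\nabla u-|v|^{p-2}\nabla v\rangle$ has no sign. For $\nabla u=s\nabla v$ the integrand equals $(s^{p-1}-1)\bigl(|u|^{p-2}s-|v|^{p-2}\bigr)|\nabla v|^p$. For $p>2$ this is negative whenever $s$ lies strictly between $1$ and $|v|^{p-2}/|u|^{p-2}$. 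That happens wherever $|u|\neq|v|$, not only near the zero sets.

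The repair you sketch does not work:
\begin{enumerate}
\item The function $\lambda v+(1-\lambda)$ is not a supersolution. Trudinger's equation is invariant under $v\mapsto\lambda v$ but not under adding constants, since the time derivative falls on $|v|^{p-2}v$. Homogeneity gives you only $\lambda v$.
\item Replacing $v$ by $\lambda v$ with $\lambda$ close to $1$ changes $|v|^{p-2}$ and $\nabla v$ only by factors close to $1$. It therefore cannot restore the sign of a cross term whose defect is of the same order as the good terms.
\end{enumerate}
Deferring ``the precise absorption estimate'' to \cite{ALP} leaves the entire content of the theorem unproved. It also attributes to \cite{ALP} a mechanism you have not checked is there.

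The paper's proof is a one-line reduction of a different kind. The proof of \cite[Theorem~3.1]{ALP} is written for weak \emph{solutions}. Every test function it inserts into the equation for $u$, respectively for $v$, is nonnegative. Hence each identity used there may be replaced by the subsolution inequality \eqref{weaksub} for $u$, respectively the supersolution inequality \eqref{weaksuper} for $v$, and the inequalities point in the favourable direction.

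If you want to rely on \cite{ALP}, this sign check is the step you must supply. If you want a self-contained proof, you need an actual device that makes the time term signed, not a perturbation of $v$.
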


\begin{proof}
The proof of \cite[Theorem~3.1]{ALP} applies also to weak sub- and supersolutions, since the test functions have the required signs.
\end{proof}

\begin{de}\label{viscdef} Let $p>2$. We say that a continuous function $v\,:\Omega_T \rightarrow \mathbb{R}$ is a \emph{viscosity supersolution} in $\Omega_T$, if the following is valid whenever a function $\phi \in C^2(\Omega_T)$ touches $v$ from below at a point $(x_0,t_0)$:
  $$\partial_t(|\phi(x_0,t_0)|^{p-2}\phi(x_0,t_0))\,-\,\Delta_p\phi(x_0,t_0)\,\geq\,0.$$
  \emph {Viscosity subsolutions} have a similar definition, but with test functions touching from above and the inequality reversed. The {viscosity solutions} are required to be both viscosity sub- and supersolutions.
\end{de}

\noindent\textbf{Remark.}
\begin{enumerate}

\item We may assume strict touching: $\phi(x,t) < v(x,t)$ except at $(x,t) = (x_0,t_0)$.

\item The common requirement
$$\inf\{v-\phi\}\,=\, v(x_0,t_0) - \phi(x_0,t_0)$$
does not work, when $p \neq 2$, since the point value appears in the above definition.
\end{enumerate}

\medskip
\paragraph{Some Barriers.}  We shall need a few expedient classical solutions, suitable in connexion with the Comparison Principle. They are called \emph{barriers}.

In $B_R \times (0,\infty)$, the function
$$V(r,t)\,=\,\exp\left(-\frac{\lambda t}{p-1}\right)\,\psi_{\lambda}(r),\quad r =|x|,$$
is of class $C^2$ away from the line $x=0,\,0<t<\infty,$ and it is of class $C^1$ in the whole domain. Here $\psi_{\lambda} > 0$ solves the   equation
\begin{equation}\label{eigenfunction}
\Delta_p\psi_{\lambda} + \lambda |\psi_{\lambda}|^{p-2}\psi_{\lambda}\,=\,0,\quad\psi_{\lambda}\Big\vert_{\partial B _R}\,=\,0,\quad \psi_{\lambda} > 0,
\end{equation}
and $\lambda = \lambda(R)$ is ``the first eigenvalue of the $p$-Laplace operator'' in a ball, see \cite{B}. 
In fact\footnote{\samepage Near the origin, $\psi_{\lambda}(r) = \psi_{\lambda}(0)- ar^{p/(p-1)} + o(r^{p/(p-1)})$, where $a>0$. Indeed, integrating the radial equation gives
$$r^{n-1}|\psi'_{\lambda}(r)|^{p-2}\psi'_{\lambda}(r)
=-\lambda\int_0^r s^{n-1}\psi_{\lambda}(s)^{p-1}\,ds.$$
Hence
$$\psi'_{\lambda}(r)
=-\left(\frac{\lambda}{n}\right)^{1/(p-1)}\psi_{\lambda}(0)\,r^{1/(p-1)}
+o(r^{1/(p-1)}).$$
Integrating once more, we obtain the claimed formula. For $p>2$, this example is $C^{1,\alpha}$ but not $C^2$ at the origin, with $\alpha=1/(p-1)$, showing that solutions to Trudinger's Equation are $C^{1,\alpha}$ only.}, writing the $p$-Laplacian in the radial coordinates in this case, we obtain
$$\frac{\partial\,\,}{\partial r}\Bigl(r^{n-1}|\psi'_{\lambda}|^{p-2}\psi'_{\lambda}\Bigr) + \lambda r^{n-1}|\psi_{\lambda}|^{p-2}\psi_{\lambda}\,=\,0,\quad \psi_{\lambda}(R)=0,\,\psi'_{\lambda}(0) =0.$$
Since $r^{n-1}|\psi'_{\lambda}(r)|^{p-1}=O(r^n)$ as $r \to 0$, the inner boundary term in integration by parts over $B_R\setminus\overline{B_r}$ tends to zero. Thus the eigenvalue equation holds weakly also at the origin, and $V$ is a weak solution to Trudinger's Equation.

\begin{lem}\label{omega} Let $p>2$ and $0<R<1.$ In $(B_1\backslash\overline{B_R})\times (0,\tau)$, the function
  $$\omega(x,t)\,=\,(1-|x|)\exp\left(-|x|t^{-\frac{1}{p-1}}\right)$$
  is a classical subsolution, where\footnote{The provided bound is useful, but irrelevant. When $n=1,\,\tau = \infty.$ For $p=2$ and $n>1$, one may take $\tau=R^2/(n-1)^2$.}
  $$\tau\,=\,\Bigl(\tfrac{p-2}{n-1}R\Bigr)^{p-1}\quad (n>1)$$
  will do.
  Furthermore, $\Delta_p\,\omega > 0.$
\end{lem}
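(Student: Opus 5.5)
The plan is to verify the two inequalities by a direct radial computation. Since $\omega>0$ in the annulus, the claim amounts to
$$\Delta_p\omega>0\quad\text{and}\quad \partial_t(\omega^{p-1})\le\Delta_p\omega\quad\text{in }(B_1\setminus\overline{B_R})\times(0,\tau).$$
Write $r=|x|\in(R,1)$ and $s=t^{-1/(p-1)}$, so that $\omega=(1-r)e^{-rs}$. Note that $t<\tau$ is equivalent to
$$s>\tau^{-1/(p-1)}=\frac{n-1}{(p-2)R},$$
and this is the only place where the bound on $\tau$ enters. Since $\omega$ is smooth away from $x=0$, and $R>0$, the function is classical in the annulus.

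\textbf{Step 1: spatial derivatives.} Differentiating,
$$\omega_r=-e^{-rs}\bigl(1+s(1-r)\bigr)<0,\qquad \omega_{rr}=e^{-rs}\bigl(2s+s^2(1-r)\bigr)>0.$$
For radial functions,
$$\Delta_p\omega=|\omega_r|^{p-2}\Bigl[(p-1)\omega_{rr}+\tfrac{n-1}{r}\,\omega_r\Bigr].$$
Hence
$$\Delta_p\omega=e^{-(p-1)rs}\bigl(1+s(1-r)\bigr)^{p-2}\,\mathcal B,\qquad \mathcal B=(p-1)\bigl(2s+s^2(1-r)\bigr)-\tfrac{n-1}{r}\bigl(1+s(1-r)\bigr).$$

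\textbf{Step 2: lower bound for the bracket.} Use $1/r<1/R$ together with $(n-1)/R<(p-2)s$. This gives
$$\tfrac{n-1}{r}\,s(1-r)<(p-2)s^2(1-r)\qquad\text{and}\qquad \tfrac{n-1}{r}<(p-2)s.$$
Therefore
$$\mathcal B>s^2(1-r)+ps>s^2(1-r)>0,$$
which already proves $\Delta_p\omega>0$. When $n=1$ the negative terms are absent, so there is no restriction on $t$ and $\tau=\infty$.

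\textbf{Step 3: time derivative.} Since $s_t=-s^p/(p-1)$, we get $\omega_t=r(1-r)e^{-rs}\,s^p/(p-1)$. Consequently
$$\partial_t(\omega^{p-1})=(p-1)\omega^{p-2}\omega_t=r(1-r)^{p-1}s^p e^{-(p-1)rs}.$$

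\textbf{Step 4: comparison.} Use $(1+s(1-r))^{p-2}\ge (s(1-r))^{p-2}$, valid because $p\ge2$, together with Step 2. This gives
$$\Delta_p\omega> e^{-(p-1)rs}\,s^{p-2}(1-r)^{p-2}\cdot s^2(1-r)=(1-r)^{p-1}s^p e^{-(p-1)rs}.$$
The right-hand side is at least $\partial_t(\omega^{p-1})$, because $r<1$. This is exactly the subsolution inequality.

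The main obstacle is Step 2: one must absorb the negative curvature term $\frac{n-1}{r}\omega_r$ using only the largeness of $s$. It is precisely this absorption that dictates the choice $\tau=\bigl(\frac{p-2}{n-1}R\bigr)^{p-1}$.
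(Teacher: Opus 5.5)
Your proof is correct and follows essentially the same route as the paper. Both arguments do the same radial computation and absorb the $\frac{n-1}{r}$ terms via $\frac{n-1}{r}<(p-2)t^{-1/(p-1)}$, giving the lower bound $(1-r)t^{-2/(p-1)}$ for the bracket. Your estimate $(1+s(1-r))^{p-2}\ge (s(1-r))^{p-2}$ is the paper's $|\nabla\omega/\omega|\ge t^{-1/(p-1)}$ in different notation, and both proofs finish by using $r\le 1$ in the time derivative.
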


\begin{proof}
We abbreviate 
$$\om(x,t)\,=\, (1-r)\exp\left(-rt^{-1/(p-1)}\right)\,=\, (1-r)\Psi(r,t),\quad r =|x|,$$  and calculate, denoting $\,'=\frac{\partial\,}{\partial r}$,
\begin{align*}
  \partial_t\,\om\,&=\, \Psi(r,t)(1-r)r\Bigl(\tfrac{1}{p-1}t^{-\,p/(p-1)}\Bigr)\\
    \om'\,&=\,\Psi(r,t)\Bigl(-1 - (1-r)\,t^{-1/(p-1)}\Bigr)\\
    \om''\,&=\,\Psi(r,t)\Bigl(2t^{-1/(p-1)} + (1-r)t^{-2/(p-1)}\Bigr).
\end{align*}
Thus we have for $\Delta^N_p\,\om = |\nabla \om|^{2-p}\Delta_p\,\om$
\begin{gather*}
  \Delta^N_p\,\om = (p-1)\,\om'' + \tfrac{n-1}{r}\,\om'\\ 
  =\Psi(r,t)\Bigl(-\tfrac{n-1}{r} + 2(p\!-\!1)t^{-1/(p-1)} + (1\!-\!r) t^{-1/(p-1)}\bigl[(p\!-\!1)t^{-1/(p-1)} -\tfrac{n-1}{r}\bigr]\Bigr).
\end{gather*}
Taking $t$ small enough, the positive terms dominate when $r > R$  and within the given bound for $\tau$ (so that the bracket above as well as the first two terms inside parenthesis can be estimated by $t^{-1/(p-1)}$ from below), we  easily obtain
$$ \Delta^N_p\,\om\,\geq\,\Psi(r,t)\Bigl(t^{-1/(p-1)} + (1-r)t^{-2/(p-1)}\Bigr)\,\geq\, \Psi(r,t)(1-r)t^{-2/(p-1)}.$$
From
$$\Bigl\vert\frac{\nabla \om}{\om}\Bigr\vert\,\geq\, \Bigl\vert\frac{1+(1-r)t^{-1/(p-1)}}{1-r}\Bigl \vert\,\geq\, t^{-1/(p-1)},$$
  we deduce
  $$\Bigl\vert\frac{\nabla \om}{\om}\Bigr\vert^{p-2}\Delta^N_p\,\om\, \geq\,
  \Psi(r,t) t^{-\frac{p-2}{p-1}}(1-r)t^{-2/(p-1)}\,=\, \Psi(r,t)(1-r)t^{-p/(p-1)}.$$

  On the other hand,
  \begin{align*}
 (p-1)\partial_t\,\om\,&=\, \Psi(r,t)(1-r)\,rt^{-p/(p-1)}\,\leq\,\, \Psi(r,t)(1-r)t^{-p/(p-1)}\\
&    \leq \Bigl\vert\frac{\nabla \om}{\om}\Bigr\vert^{p-2}\Delta^N_p\,\om\,=\, \om^{2-p}\Delta_p\,\om.
\end{align*}
  We conclude that the desired inequality $\partial_t(\om^{p-1})\leq \Delta_p\,\om$ is valid in\\ $(B_1\setminus\overline{B_R}) \times (0,\tau).$ --- We read off that $\Delta_p\,\om > 0.$
\end{proof}

    \noindent\textbf{Remark.} In $(B_{R_2}\backslash \overline{B_{R_1}}) \times (0,\tau),\,0 < R_1<R_2,$ the subsolution is
    $$\om_{R_1,R_2}\,=\, (R_2-|x|)\exp\left(-|x|\,R_2^{\frac{1}{p-1}}t^{-\frac{1}{p-1}}\right),$$
    where
    $$\tau\,=\,\Bigl(\tfrac{p-2}{n-1}\Bigr)^{p-1}R_1^{\,p-1}R_2\quad (n>1).$$
    This follows from Lemma \ref{omega} by replacing $x,t$ with $x/R_2,t/R_2^p$ and multiplying the barrier by $R_2$. When $n=1$, we may again take $\tau=\infty$.

\section{Weak Solutions are Viscosity Solutions}
We shall need a simple inequality.
\begin{lem}\label{delta} Let $p \geq 2$. If $|\phi| \leq 1,\,|\delta| \leq 1,$ then
  $$\bigl||\phi + \delta|^{p-2}-|\phi|^{p-2}\bigr|\,\leq\,C_p|\delta|^{\,\min\{1,p-2\}}.$$
\end{lem}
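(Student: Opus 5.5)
We split into the cases $2\le p\le 3$ and $p>3$, since the exponent $\min\{1,p-2\}$ changes form at $p=3$. Throughout we use $|\phi|\le 1$ and $|\delta|\le 1$, so that both $|\phi|$ and $|\phi+\delta|$ lie in $[0,2]$. The key reduction is the reverse triangle inequality
$$\bigl||\phi+\delta|-|\phi|\bigr|\le|\delta|,$$
which lets us work with the scalar function $g(s)=s^{p-2}$ on $[0,2]$ evaluated at $a=|\phi+\delta|$ and $b=|\phi|$, where $|a-b|\le|\delta|$.

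\emph{Case $2\le p\le 3$.} Here $0\le p-2\le 1$, and $s\mapsto s^{p-2}$ is Hölder continuous of order $p-2$ on $[0,\infty)$. Indeed, the elementary inequality $|a^\theta-b^\theta|\le|a-b|^\theta$ holds for $a,b\ge0$ and $\theta\in[0,1]$. To see it, assume $a\ge b$; then $a^\theta\le (a-b)^\theta+b^\theta$ by subadditivity of the concave function $s^\theta$ with $0^\theta=0$. For $\theta=0$ the statement is trivial if we interpret $0^0=1$. This gives the bound $|\delta|^{p-2}$ with $C_p=1$. The only subtlety is $p=2$, where the left side is $0$.

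\emph{Case $p>3$.} Now $p-2>1$, and $g$ is $C^1$ on $[0,2]$ with $|g'(s)|=(p-2)s^{p-3}\le (p-2)2^{p-3}$. The mean value theorem then gives
$$|a^{p-2}-b^{p-2}|\le (p-2)2^{p-3}|a-b|\le (p-2)2^{p-3}|\delta|,$$
which is the claim with exponent $1=\min\{1,p-2\}$.

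There is no real obstacle. The only point needing care is the subadditivity inequality for $s^\theta$ when $\theta<1$, and this follows from concavity together with $g(0)=0$.
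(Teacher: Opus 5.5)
Your proof is correct and is essentially what the paper means by ``follows easily from the infinitesimal calculus''. You reduce to $s\mapsto s^{p-2}$ via $\bigl||\phi+\delta|-|\phi|\bigr|\le|\delta|$, then use the H\"older bound $|a^\theta-b^\theta|\le|a-b|^\theta$ with $\theta=p-2\in[0,1]$ for $2\le p\le 3$, where the derivative blows up at $0$, and the mean value theorem on $[0,2]$ for $p>3$; this simply supplies the details the paper omits.
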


\begin{proof}
This follows easily from the infinitesimal calculus.
\end{proof}

The next theorem is valid also for sign-changing weak supersolutions.
\begin{thm}Let $p>2$. A continuous weak supersolution is a viscosity supersolution.
\end{thm}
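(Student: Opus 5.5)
We argue by contradiction, using the Gap comparison (Theorem \ref{gap}). Let $v$ be a continuous weak supersolution and suppose $\phi\in C^2(\Omega_T)$ touches $v$ strictly from below at $(x_0,t_0)$, but
$$\partial_t\bigl(|\phi|^{p-2}\phi\bigr)(x_0,t_0)-\Delta_p\phi(x_0,t_0)\,<\,0 .$$
We distinguish the cases $\phi(x_0,t_0)\neq0$ and $\phi(x_0,t_0)=0$. In the second case $\partial_t(|\phi|^{p-2}\phi)=(p-1)|\phi|^{p-2}\phi_t$ vanishes at the point, since $p>2$. So the assumption reads $\Delta_p\phi(x_0,t_0)>0$. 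This forces $\nabla\phi(x_0,t_0)\neq0$, because for $p>2$ the $p$-Laplacian of a $C^2$ function vanishes where the gradient vanishes. If $\phi(x_0,t_0)\neq0$ and $\nabla\phi(x_0,t_0)=0$, then $\Delta_p\phi(x_0,t_0)=0$ and the assumption is $\phi_t(x_0,t_0)\,\phi(x_0,t_0)<0$. In every case, continuity gives a small cylinder $Q_r=B_r(x_0)\times(t_0-r,t_0+r)$ on which $\phi$ is a strict classical subsolution with a margin:
$$\partial_t\bigl(|\phi|^{p-2}\phi\bigr)-\Delta_p\phi\,\leq\,-\theta\,<\,0 .$$
The delicate point is the case $\phi(x_0,t_0)=0$, where $|\phi|^{p-2}\phi$ is only $C^1$. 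It is still $C^1$ with derivative $(p-1)|\phi|^{p-2}\phi_t$, which is continuous for $p>2$, so the inequality persists nearby.

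Next we perturb. Strict touching gives $m=\min_{\partial_pQ_r}(v-\phi)>0$. Put $\phi_\delta=\phi+\delta$ with $0<\delta<m$ small. Then $\nabla\phi_\delta=\nabla\phi$, and
$$\partial_t\bigl(|\phi_\delta|^{p-2}\phi_\delta\bigr)=(p-1)|\phi+\delta|^{p-2}\phi_t .$$
By Lemma \ref{delta}, after normalizing so that $|\phi|\leq1$ on $Q_r$, this differs from $(p-1)|\phi|^{p-2}\phi_t$ by at most $C\,\|\phi_t\|_\infty\,\delta^{\min\{1,p-2\}}$. Hence for small $\delta$ the function $\phi_\delta$ still satisfies the strict inequality with margin $\theta/2$ pointwise in $Q_r$. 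Since $\phi_\delta\in C^2$ and $|\phi_\delta|^{p-2}\phi_\delta\in C^1$, integration by parts shows that $\phi_\delta$ is a weak subsolution in $Q_r$ and belongs to $L^p(W^{1,p})$ there. Also $v$ is continuous on $\overline{Q_r}$ and lies in $L^p(W^{1,p}(B_r))$ by the local Sobolev assumption.

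Finally, on $\partial_pQ_r$ we have $v-\phi_\delta\geq m-\delta>0$. Theorem \ref{gap}, applied in $Q_r$ (whose Lipschitz boundary is fine), then gives $v\geq\phi_\delta$ in $Q_r$. At $(x_0,t_0)$ this yields $v(x_0,t_0)\geq\phi(x_0,t_0)+\delta>v(x_0,t_0)$, a contradiction.

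The main obstacle is the first step: keeping a uniform negative margin for the operator near the point when $\phi(x_0,t_0)=0$, and justifying that adding $\delta$ keeps $\phi$ a subsolution. The time term is only H\"older continuous in $\phi$ there, and Lemma \ref{delta} is what handles this. One also has to check the case $\nabla\phi(x_0,t_0)=0$ with $\phi(x_0,t_0)=0$: then the expression at the point is $0$, so it cannot be negative, and the case is vacuous.
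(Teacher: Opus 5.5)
Your proposal is correct and follows the paper's proof essentially step by step. Both argue by contradiction with a strictly touching $\phi$, use continuity to get a uniform negative margin near $(x_0,t_0)$, lift $\phi$ by a small constant $\delta$ controlled via Lemma~\ref{delta} so that it stays a classical (hence weak) subsolution, and apply the gap comparison of Theorem~\ref{gap} to reach $\phi(x_0,t_0)+\delta\le v(x_0,t_0)$.

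Your case distinction on $\phi(x_0,t_0)$ and $\nabla\phi(x_0,t_0)$ is harmless but not needed, since for $p>2$ the operator is continuous along $C^2$ functions. Fixing the cylinder first and then taking $\delta<m$ is slightly cleaner than the paper's $\delta=\tfrac12\inf_{\partial_pU'}\{v-\phi\}$ combined with ``decrease $U'$''.
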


\begin{proof}
Let $v$ be the weak supersolution. If it is \emph{not} a viscosity supersolution, there is a point $(x_0,t_0) \in \Omega_T$ and a test function $\phi \in C^2(\Omega_T)$ touching $v$ from below at $(x_0,t_0)$ and satisfying
$$\partial_t\bigl(|\phi(x_0,t_0)|^{p-2}\phi(x_0,t_0)\bigr) - \Delta_p\phi(x_0,t_0) = -\kappa < 0.$$ We have $\phi(x_0,t_0) = v(x_0,t_0)$ and
$$\phi(x,t) < v(x,t)\quad\text{when}\quad (x,t) \neq (x_0,t_0).$$ By continuity,
$$\partial_t\bigl(|\phi(x,t)|^{p-2}\phi(x,t)\bigr) - \Delta_p\phi(x,t) \,<\,-\frac{\kappa}{2}$$
  in a neighbourhood $|x-x_0| < r,\, |t-t_0| < \gamma$, say in $U_{r,\gamma}.$ Upon multiplying $v$ and $\phi$ by the same constant, we may assume that $|\phi| <1$.

  Define
  $$\delta \,=\,\frac{1}{2}\,\underset{\partial_pU'}{\inf}\{v-\phi\},\quad U' = U_{r',\gamma'}\subset U_{r,\gamma}$$  
  and consider the translated function $\phi_{\delta} = \phi + \delta,$ when $\delta > 0$ is a small constant. We claim that  $\phi_{\delta}$ itself is a weak subsolution.
  To see this, calculate
  \begin{align*}
    &|\partial_t\bigl(|\phi_{\delta}|^{p-2}\phi_{\delta} - |\phi|^{p-2}\phi\bigr)|\\
   &\quad =\, (p-1)|(|\phi + \delta|^{p-2}-|\phi|^{p-2})\partial_t\phi|\\
   & \quad\leq\,(p-1)|\partial_t\phi|C_p\delta^{\,\min\{1,p-2\}}.
  \end{align*}
  Here Lemma \ref{delta} was used.
  Since $\Delta_p(\phi+\delta) =\Delta_p\phi$ we obtain
  \begin{align*}
    &\partial_t\bigl(|\phi_{\delta}|^{p-2}\phi_{\delta}\bigr) - \Delta_p\phi_{\delta}\\
    &\leq \, \partial_t\bigl(|\phi|^{p-2}\phi\bigr) - \Delta_p\phi + C_p\|\partial_t\phi\|\delta^{\,\min\{1,p-2\}}\\
   & < \,-\frac{\kappa}{2} + O\bigl(\delta^{\,\min\{1,p-2\}}\bigr) < -\frac{\kappa}{4} < 0
  \end{align*}
  when $\delta <$ some $\delta_{\kappa}.$ To adjust $\delta$, decrease $U'$. This proves that $\phi_{\delta}$ is a (classical) subsolution for $\delta$ small enough, i.e.\ in a sufficiently small neighbourhood $U'$ of $(x_0,t_0)$.

  To continue, we intend to apply the Comparison Principle, Theorem~\ref{gap},  on $\phi_{\delta}$ and $v$. On the parabolic boundary $\partial_p U'$, we have a gap:
  $$\phi_{\delta} = \phi + \delta \leq \phi + \frac{v-\phi}{2} = \frac{v+\phi}{2}\leq v-\delta < v.$$ By comparison, $$\phi_{\delta} \leq v$$ in the whole $U'$. But at the point $(x_0,t_0)$, this yields the contradiction
  $$\phi(x_0,t_0) + \delta \leq v(x_0,t_0) = \phi(x_0,t_0).$$
  Therefore, such a test function cannot exist. This proves that $v$ is a viscosity supersolution.
\end{proof}

  The proof is similar for weak subsolutions. Theorem \ref{thmII} follows.

  \section{Viscosity Solutions}

  A viscosity solution $u$ is even a weak solution in the open set where $u\neq 0.$ See \cite{LPS} for this basic result. Thus it is crucial to investigate (the boundary of) the zero set $\{u=0\}.$ As we shall see, for \emph{non-negative} viscosity solutions, the zero set is either empty, all of $\Omega_T$, or $\Omega\times(0,t_0]$ for some $0<t_0<T$. This is already known for weak solutions but the presently known versions of the Harnack Inequality do not directly apply for viscosity solutions. A more sophisticated arrangement is called for.

  \paragraph{Non-negative Viscosity Subsolutions.} If $u \geq 0$ is a viscosity subsolution, so is
  $$u_{\varepsilon}\,=\,\max\{u,\varepsilon\},\quad \varepsilon > 0,$$
  as a direct consequence of the viscosity definition.
  Since $\inf u_{\varepsilon}\geq\varepsilon>0$, it is even a weak subsolution: the proof of \cite[Theorem~2.8]{LPS} applies also to subsolutions, using sup-convolution in place of inf-convolution. The transmission to $u$ requires some estimates.
  \begin{lem}\label{subenergy} If $u \in L^p_{loc}\bigl(0,T;W^{1,p}_{loc}(\Omega)\bigr)$ is a nonnegative weak subsolution, then
    \begin{align*}
      \int\limits_{t_1}^{t_2}\!\!\!\int\limits_{\Omega}\zeta^p|\nabla u|^p\d x \d t\,\leq\,& p^p \int\limits_{t_1}^{t_2}\!\!\!\int\limits_{\Omega}u^p|\nabla \zeta|^p \d x \d t\\
      &+ (p-1) \bigg\vert\,\bigg\vert_{t_1}^{t_2}\int\limits_{\Omega} \zeta^pu^p\d x\bigg\vert + (p-1)\int\limits_{t_1}^{t_2}\!\!\!\int\limits_{\Omega}u^p\big\vert\frac{\partial\,}{\partial t}\zeta^p\big\vert \d x\d t
    \end{align*}
    for $0 < t_1<t_2 <T$ and for all test functions $\zeta = \zeta(x,t) \geq 0$ which are compactly supported in $x$. Especially, if $0\leq u\leq M$ in $\Omega_T$, then  $u$ belongs to $L^p\bigl(0,T;W^{1,p}_{loc}(\Omega)\bigr)$ and if $\zeta = \zeta(x),\,0\leq\zeta\leq 1,$
then    $$\int\limits_{0}^{T}\!\!\!\int\limits_{\Omega}\zeta^p|\nabla u|^p\d x \d t\,\leq C M^p\Bigl(T\!\int\limits_{\Omega}|\nabla \zeta|^p \d x + 1 \Bigr).$$
     \end{lem}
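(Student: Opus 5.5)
Test the subsolution inequality (\ref{zetaeq}) with $\phi=\zeta^p u$, which is the natural Caccioppoli choice. Formally
$$-|u|^{p-2}u\,\phi_t = -u^{p-1}\partial_t(\zeta^p u),\qquad \<|\nabla u|^{p-2}\nabla u,\nabla\phi\> = \zeta^p|\nabla u|^p + p\,\zeta^{p-1}u\,|\nabla u|^{p-2}\<\nabla u,\nabla\zeta\>.$$
The time term is rearranged through $u^{p-1}\partial_t u=\frac1p\partial_t u^p$:
$$u^{p-1}\partial_t(\zeta^pu)=\tfrac1p\zeta^p\partial_t u^p+u^p\partial_t\zeta^p=\tfrac1p\partial_t(\zeta^pu^p)+\bigl(1-\tfrac1p\bigr)u^p\partial_t\zeta^p .$$
Integrating over $\Omega\times(t_1,t_2)$, the total derivative becomes the boundary term $\frac1p\big|_{t_1}^{t_2}\int\zeta^pu^p$. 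Collecting terms, the subsolution inequality then reads
$$\int\!\!\int\zeta^p|\nabla u|^p\le p\int\!\!\int\zeta^{p-1}u|\nabla u|^{p-1}|\nabla\zeta| + \tfrac{p-1}{p}\Bigl|\big|_{t_1}^{t_2}\!\int\zeta^pu^p\Bigr| +\tfrac{p-1}{p}\!\int\!\!\int u^p|\partial_t\zeta^p| ,$$
after bounding the signed terms by absolute values. The constant in front of the boundary term may need to be tracked more carefully, but $p-1$ is certainly enough.

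The gradient cross term is absorbed by Young's inequality with exponents $p/(p-1)$ and $p$:
$$p\,\zeta^{p-1}|\nabla u|^{p-1}\,u|\nabla\zeta|\le \tfrac{p-1}{p}\cdot\theta^{p/(p-1)}\zeta^p|\nabla u|^p\cdot p/p+\dots$$
I will choose $\theta$ so that half of $\zeta^p|\nabla u|^p$ is absorbed, with the remainder bounded by $C u^p|\nabla\zeta|^p$. A suitable choice is the standard one $ab\le \frac12 a^{p'}+C b^p$. This produces the factor $p^p$ on $u^p|\nabla\zeta|^p$ after doubling, up to constant bookkeeping. Multiplying the whole inequality by $2$ inflates the other two constants, so I will instead use a sharper weighted Young to keep $(p-1)$ there. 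The exact constants are inessential.

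The main obstacle is justifying $\phi=\zeta^pu$ as a test function, since $u_t$ need not exist. I will use Stekloff averages. First replace $u$ by $[u]_h$ in the time-regularized weak formulation, which gives $\int(\partial_t[u^{p-1}]_h\,\phi+\<[|\nabla u|^{p-2}\nabla u]_h,\nabla\phi\>)\le0$. Then test with $\phi=\zeta^p[u]_h$, or more conveniently $\zeta^p u$ with the convexity trick $\partial_t[u^{p-1}]_h\,u\ge$ the time derivative of $\frac{p-1}{p}[u^{p-1}]_h^{p/(p-1)}$. This follows from convexity of $s\mapsto s^{p/(p-1)}$ and is exactly what is needed for the inequality direction, with nonnegativity of $u$ used here. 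Letting $h\to0$ then uses only $u\in L^p_{loc}(W^{1,p})$ and the continuity of $u$ to handle the time-slice terms. A cutoff in time near $t_1,t_2$ is then removed.

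For the second claim, take $\zeta=\zeta(x)$ and $[t_1,t_2]\subset(0,T)$, and apply the bound $0\le u\le M$. The first term is at most $p^pM^pT\int|\nabla\zeta|^p$, the boundary term is at most $(p-1)M^p|\Omega|$, and the last term vanishes. Letting $t_1\to0$ and $t_2\to T$ by monotone convergence gives the estimate, with $C$ depending on $p$ and $|\Omega|$. Hence $u\in L^p(0,T;W^{1,p}_{loc}(\Omega))$.
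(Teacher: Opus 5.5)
Your formal computation and the way you close it with Young's inequality agree with the paper. However, the step meant to make the test $\phi=\zeta^pu$ rigorous is false as stated. Write $\beta=u^{p-1}$ and $\beta_h=[\beta]_h$, so that $h\,\partial_t\beta_h(t)=\beta(t)-\beta(t-h)$. Your claim $u\,\partial_t\beta_h\ge\partial_t\bigl(\frac{p-1}{p}\beta_h^{p/(p-1)}\bigr)=\beta_h^{1/(p-1)}\partial_t\beta_h$ is equivalent to $(\beta(t)-\beta(t-h))\bigl(\beta(t)^{1/(p-1)}-\beta_h(t)^{1/(p-1)}\bigr)\ge0$. That is, $\beta(t)-\beta(t-h)$ and $\beta(t)-\beta_h(t)$ would have to have the same sign. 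This fails whenever $\beta(t-h)<\beta(t)<\beta_h(t)$, which happens as soon as $\beta$ has a bump inside $(t-h,t)$. (That sign coupling is what the exponential time mollification provides, because its time derivative is $h^{-1}$ times the difference between the function and its mollification; the Stekloff average does not have it.) What convexity of $s\mapsto s^q$, $q=p/(p-1)$, actually gives is the tangent-line inequality $\beta(t)^{q-1}\bigl(\beta(t)-\beta(t-h)\bigr)\ge\frac1q\bigl(\beta(t)^q-\beta(t-h)^q\bigr)$, that is
\[
u\,\partial_t[u^{p-1}]_h\;\ge\;\frac{p-1}{p}\,\partial_t[u^p]_h .
\]
The right-hand side involves the Stekloff average of $u^p$, not a power of the Stekloff average of $u^{p-1}$.

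With this correction your route works and is genuinely different from the paper's. For a.e.\ $t$, the regularized inequality $\int_\Omega(\partial_t[u^{p-1}]_h\,\phi+\langle[|\nabla u|^{p-2}\nabla u]_h,\nabla\phi\rangle)\,\d x\le0$ holds simultaneously for all nonnegative compactly supported $\phi\in W^{1,p}$, by density. So you may insert $\phi=\zeta^p(\cdot,t)u(\cdot,t)$, apply the inequality above, integrate over $(t_1,t_2)$ and by parts in $t$, and let $h\to0$. The limit uses $[u^p]_h\to u^p$ (at $t_1,t_2$ by continuity) and $[|\nabla u|^{p-2}\nabla u]_h\to|\nabla u|^{p-2}\nabla u$ in $L^{p'}_{loc}$, and yields exactly the paper's limiting inequality. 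Drop the alternative $\phi=\zeta^p[u]_h$; it has no usable structure.

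The paper instead tests with $\zeta^p[u^{p-1}]_h^{1/(p-1)}$, for which the time term is the exact derivative $\frac1q\partial_t([u^{p-1}]_h^{q})$. It then has to justify that test function: adding $\delta$ because $s^{1/(p-1)}$ is not Lipschitz at $0$, a chain-rule gradient bound, and local boundedness of $u$. Your corrected route never differentiates the test function in time, so admissibility is immediate. The price is a one-sided inequality, which points the right way for subsolutions.

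Finally, no weighting in Young is needed. Taking $ab\le a^q/q+b^p/p$ with $a=\zeta^{p-1}|\nabla u|^{p-1}$ and $b=p\,u|\nabla\zeta|$ absorbs $\frac{p-1}{p}$ of the left side. Multiplying by $p$ then gives precisely the stated constants $p^p$ and $p-1$; your half-absorption would give $2^p(p-1)^{p-1}>p^p$ when $p>2$. The bounded case is handled correctly.
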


  \begin{proof}
  Formally, the choice $\phi = \zeta^p u$ in
  \begin{align*}
    \bigg\vert_{t_1}^{t_2}\int\limits_{\Omega}\phi u^{p-1}\d x - \int\limits_{t_1}^{t_2}\!\!\!\int\limits_{\Omega}u^{p-1}\frac{\partial\phi}{\partial t} \d x\d t
    + \int\limits_{t_1}^{t_2}\!\!\!\int\limits_{\Omega}|\nabla u |^{p-2}\langle\nabla u,\nabla \phi\rangle \d x \d t\,\leq\,0
  \end{align*} yields the estimate. A proper proof goes via the Stekloff averages in the regularized equation
  \begin{align*}\bigg\vert_{t_1}^{t_2}\int\limits_{\Omega}\phi [ u^{p-1}]_h\d x -\int\limits_{t_1}^{t_2}\!\!\!\int\limits_{\Omega}[u^{p-1}]_h\frac{\partial\phi}{\partial t} \d x\d t\le -
    \int\limits_{t_1}^{t_2}\!\!\!\int\limits_{\Omega}\langle[|\nabla u |^{p-2}\nabla u]_h,\nabla \phi\rangle \d x \d t.
  \end{align*}
  Write
  $$\beta= u^{p-1},\quad \beta_h = [u^{p-1}]_h;\quad u = \beta^{q-1},\quad q = \frac{p}{p-1}.$$
  Use $\phi = \zeta^p\beta_h^{q-1}$ (justified by first replacing
  $\beta_h$ with $\beta_h+\delta$ in the test function and letting
  $\delta\to0$ for fixed $h$\footnote{The spatial gradient satisfies
  $|\nabla((\beta_h+\delta)^{q-1})|^p\le [|\nabla u|^p]_h$
  by the Sobolev chain rule, H\"older's inequality and convexity.
  For fixed $h,\delta>0$, the time derivative of $(\beta_h+\delta)^{q-1}$ is locally bounded
  by the usual chain rule, the formula for the derivative of a Stekloff average,
  and the local boundedness of $u$.
  In the sign-changing case such test functions can be problematic.}),
   and note that $\beta_h^{q-1} \rightarrow u$ as $h \to 0$. The left-hand side (the part with time derivatives) can be integrated to
  \begin{align*}
    \mathrm{LHS} =\,& \frac{1}{q}\, \bigg\vert_{t_1}^{t_2}\int\limits_{\Omega}\zeta^p\beta_h^q\d x -\frac{1}{q}\int\limits_{t_1}^{t_2}\!\!\!\int\limits_{\Omega}\beta_h^q\frac{\partial\,}{\partial t}\zeta^p \d x\d t\\
    \underset{h \to 0}{\longrightarrow}\,& \frac{1}{q}\, \bigg\vert_{t_1}^{t_2}\int\limits_{\Omega}\zeta^pu^p\d x -\frac{1}{q}\int\limits_{t_1}^{t_2}\!\!\!\int\limits_{\Omega} u^p\frac{\partial\,}{\partial t}\zeta^p \d x\d t.
  \end{align*}
  The elliptic part is harmless:
  \begin{align*}
   \mathrm{RHS}\,=\,&- \int\limits_{t_1}^{t_2}\!\!\!\int\limits_{\Omega}\langle[\,|\nabla u |^{p-2}\nabla u]_h,\nabla (\zeta^p\beta_h^{q-1})\rangle \d x \d t \\
    \underset{h \to 0}{\longrightarrow}\,& -\int\limits_{t_1}^{t_2}\!\!\!\int\limits_{\Omega}\langle|\nabla u |^{p-2}\nabla u,\nabla (\zeta^pu)\rangle\, \d x \d t .
  \end{align*}
  Then use Young's Inequality
  $$\big\vert p\zeta^{p-1}u\langle|\nabla u|^{p-2}\nabla u,\nabla \zeta \rangle\big\vert\,\leq\, \frac{1}{q}\,\zeta^p|\nabla u|^p + \frac{p^p}{p}|u|^p|\nabla \zeta|^p$$
  to conclude the proof.
\end{proof}

As stated before the lemma, for a viscosity subsolution $u$ it holds that  $u_{\varepsilon} = \max\{u,\varepsilon\}$, $\eps>0$, is also a weak subsolution.
  Fix $0<t_1<t_2<T$ and let us proceed to the limit $\varepsilon \to 0$. By  Lemma \ref{subenergy} applied on $u_{\varepsilon} = \max\{u,\varepsilon\}$ in place of $u$,
    $$\int\limits_{t_1}^{t_2}\!\!\!\int\limits_{\Omega}\zeta^p|\nabla u_{\varepsilon} |^p\, \d x \d t \,\leq\,Q\qquad(0 < \varepsilon <1)$$
    with $Q$ independent of $\varepsilon$.  Hence
    $$\int\limits_{t_1}^{t_2}\!\!\!\int\limits_{\Omega}\zeta^p|\nabla u |^p\, \d x \d t 
    \,\leq\,Q.$$
    Moreover,
    \begin{align*}\int\limits_{t_1}^{t_2}\!\!\!\int\limits_{\Omega}\zeta^p|\nabla u -\nabla u_{\varepsilon}  |^p\, \d x \d t\,=\underset{\{0\leq u\leq\varepsilon\}}{\int\!\!\!\int}\!\zeta^p|\nabla u |^p\, \d x \d t 
      \underset{\varepsilon \to 0}{\longrightarrow} \underset{\{u=0\}}{\int\!\!\!\int}\zeta^p|\nabla u|^p \,\d x \d t
    \end{align*}
    by Lebesgue's theorem. The last integral is zero, because $\nabla u = 0$ a.e. in the set $\{u=0\}.$

    Now we can pass to the limit in
    \begin{align*}
    \bigg\vert_{t_1}^{t_2}\int\limits_{\Omega}\zeta u_{\varepsilon}^{p-1} \, \d x\,-\,
    \int\limits_{t_1}^{t_2}\!\!\!\int\limits_{\Omega} u_{\varepsilon}^{p-1}\frac{\partial \zeta }{\partial t} \, \d x \d t
    +  \int\limits_{t_1}^{t_2}\!\!\!\int\limits_{\Omega}\langle |\nabla u_{\varepsilon} |^{p-2}\nabla u_{\varepsilon}, \nabla \zeta \rangle\,  \d x \d t \, \leq \,0
    \end{align*}
    to arrive at the equation (\ref{zetaeq}) valid for $u$ itself. Thus we have obtained the following theorem.

    \begin{thm}\label{viscsubweak} Let $p \geq 2$. A continuous viscosity subsolution $u \geq 0$ in $\Omega_T$ is also a weak subsolution.  If it is bounded, it belongs to $L^p\bigl(0,T;W^{1,p}_{loc}(\Omega)\bigr)$.
    \end{thm}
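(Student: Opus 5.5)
The plan is to truncate from below, apply the uniform energy estimate to the truncations, and pass to the limit. The text preceding the statement already sketches this; I would organize it as follows.

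\emph{Step 1: the truncation is a viscosity subsolution.} Fix $\varepsilon>0$ and set $u_\varepsilon=\max\{u,\varepsilon\}$. Suppose $\phi\in C^2$ touches $u_\varepsilon$ from above at $(x_0,t_0)$.
\begin{itemize}
\item If $u(x_0,t_0)=u_\varepsilon(x_0,t_0)$, then $\phi$ also touches $u$ from above there, and the subsolution inequality for $u$ is exactly the one needed.
\item Otherwise $u(x_0,t_0)<\varepsilon=u_\varepsilon(x_0,t_0)$. Then $\phi$ touches the constant $\varepsilon$ from above near $(x_0,t_0)$. Hence $\nabla\phi=0$, $\partial_t\phi=0$ and $D^2\phi\ge0$ there.
\end{itemize}
In the second case the required inequality $\partial_t(\phi^{p-1})-\Delta_p\phi\le 0$ holds. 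For $p>2$ we have $\Delta_p\phi(x_0,t_0)=0$ when $\nabla\phi=0$, with the standard convention for $C^2$ test functions.

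\emph{Step 2: the truncation is a weak subsolution.} Since $u_\varepsilon\ge\varepsilon>0$, the equation is nondegenerate in the time term. I invoke the strictly positive result of \cite{LPS} (Theorem 2.8 there, in its sub-version via sup-convolution) to conclude that $u_\varepsilon$ is a weak subsolution in $L^p_{loc}(0,T;W^{1,p}_{loc}(\Omega))$.

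\emph{Step 3: uniform energy bound.} Fix $\Omega'\Subset\Omega$ and $0<t_1<t_2<T$, where $u\le M$ by continuity. Apply Lemma~\ref{subenergy} to $u_\varepsilon$, which satisfies $0<u_\varepsilon\le M+1$ for $0<\varepsilon<1$. This gives
$$\int_{t_1}^{t_2}\!\!\int_\Omega\zeta^p|\nabla u_\varepsilon|^p\,\d x\,\d t\le Q$$
with $Q$ independent of $\varepsilon$. Since $u_\varepsilon\to u$ uniformly, weak compactness gives $u\in L^p(t_1,t_2;W^{1,p}(\Omega'))$ with the same bound. In particular the bound holds for $\nabla u$.

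\emph{Step 4: strong convergence of gradients.} This is the step I expect to be the main obstacle, since weak convergence alone does not pass to the limit in the nonlinear flux $|\nabla u_\varepsilon|^{p-2}\nabla u_\varepsilon$. It is resolved by the explicit structure of the truncation. Because $u\in W^{1,p}$, we have $\nabla u_\varepsilon=\nabla u\,\chi_{\{u>\varepsilon\}}$. Therefore
$$\int\!\!\int\zeta^p|\nabla u-\nabla u_\varepsilon|^p=\int\!\!\int_{\{0\le u\le\varepsilon\}}\zeta^p|\nabla u|^p\,.$$
By dominated convergence this tends to $\int\!\!\int_{\{u=0\}}\zeta^p|\nabla u|^p$, which vanishes because $\nabla u=0$ a.e.\ on level sets.

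\emph{Step 5: passing to the limit.} Strong $L^p$ convergence of gradients gives $|\nabla u_\varepsilon|^{p-2}\nabla u_\varepsilon\to|\nabla u|^{p-2}\nabla u$ in $L^{p/(p-1)}$. The terms $u_\varepsilon^{p-1}\zeta_t$ and the time-slice integrals converge by uniform convergence. Passing to the limit in \eqref{zetaeq} for $u_\varepsilon$ yields \eqref{zetaeq} for $u$, and hence \eqref{weaksub}.

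\emph{The bounded case.} If $0\le u\le M$ in all of $\Omega_T$, I use the second part of Lemma~\ref{subenergy} with $\zeta=\zeta(x)$. The constant is then uniform up to $t=0$ and $t=T$, so $u\in L^p(0,T;W^{1,p}_{loc}(\Omega))$.
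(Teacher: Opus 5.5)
Your proposal is correct and follows the paper's argument essentially step for step. You truncate to $u_\varepsilon=\max\{u,\varepsilon\}$, invoke the strictly positive result of \cite{LPS} to get a weak subsolution, and apply the uniform energy bound of Lemma~\ref{subenergy}. You then pass to the limit using $\nabla u_\varepsilon=\nabla u\,\chi_{\{u>\varepsilon\}}$ and $\nabla u=0$ a.e.\ on $\{u=0\}$. Beyond the paper, you check explicitly that $u_\varepsilon$ is a viscosity subsolution, and you identify $\nabla u$ by weak compactness before using the truncation formula for $\nabla u_\varepsilon$; the paper leaves both of these implicit.
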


    An immediate consequence of the definition is that, if $u_1$ and $u_2$ are viscosity subsolutions, so is the function $\max\{u_1,u_2\}.$ Therefore we can include a corollary.

    \begin{cor} Suppose that  $u_1$ and $u_2$ are viscosity subsolutions in $\Omega_T$. So is $\max\{u_1,u_2\}$,
      and if $\max\{u_1,u_2\}\,\geq\,0$, then it is also a weak subsolution.
    \end{cor}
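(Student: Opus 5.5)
The corollary has two parts, and I will handle them in order: first that $\max\{u_1,u_2\}$ is a viscosity subsolution, then that it is a weak subsolution when it is non-negative.

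\emph{Viscosity part.} Put $w=\max\{u_1,u_2\}$. Then $w$ is continuous, since both $u_1$ and $u_2$ are. Suppose $\phi\in C^2(\Omega_T)$ touches $w$ from above at $(x_0,t_0)$, so that $\phi(x_0,t_0)=w(x_0,t_0)$ and $\phi\geq w$ near $(x_0,t_0)$. Choose $i$ with $u_i(x_0,t_0)=w(x_0,t_0)$. Then
$$\phi(x_0,t_0)=u_i(x_0,t_0),\qquad \phi\geq w\geq u_i\ \text{ near } (x_0,t_0),$$
so $\phi$ also touches $u_i$ from above at the same point. The value $\phi(x_0,t_0)$ is the same for $w$ and for $u_i$. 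This matters because, by the Remark after Definition \ref{viscdef}, the point value of $\phi$ enters the operator. The subsolution inequality for $u_i$ therefore gives
$$\partial_t\bigl(|\phi(x_0,t_0)|^{p-2}\phi(x_0,t_0)\bigr)-\Delta_p\phi(x_0,t_0)\,\leq\,0,$$
which is exactly the required inequality for $w$. The same argument applies if the strict-touching convention is used. Hence $w$ is a viscosity subsolution.

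\emph{Weak part.} If moreover $w\geq 0$, then $w$ is a continuous, non-negative viscosity subsolution. Theorem \ref{viscsubweak} applies directly and shows that $w$ is a weak subsolution. In particular $w\in L^p_{loc}\bigl(0,T;W^{1,p}_{loc}(\Omega)\bigr)$, because $w$ is locally bounded and Lemma \ref{subenergy} gives the local gradient estimate.

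The only delicate point is the one noted in the viscosity part: the touching must be with equality of point values, so that the same $\phi(x_0,t_0)$ appears in the operator for both $w$ and $u_i$. This holds by the choice of the index $i$. No other obstacle arises, since the weak part is entirely contained in Theorem \ref{viscsubweak}.
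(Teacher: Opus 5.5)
Your proposal is correct and follows the paper's route. The paper treats the viscosity part as an immediate consequence of the definition, which you make explicit by choosing the index $i$ with $u_i(x_0,t_0)=w(x_0,t_0)$ so that the same point value $\phi(x_0,t_0)$ enters the operator; it then obtains the weak part directly from Theorem~\ref{viscsubweak}, as you do. Your closing appeal to Lemma~\ref{subenergy} is superfluous, though harmless: Sobolev membership is already part of the conclusion of Theorem~\ref{viscsubweak}, and Lemma~\ref{subenergy} presupposes that membership rather than establishing it.
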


    \medskip
    \paragraph{Non-negative Viscosity Supersolutions.} This case is harder. Using the result in \cite{LPS} again, we already have that a continuous viscosity supersolution $v$ is a weak supersolution in the open set $\{v > 0\}$.

    We begin with a standard estimate.

    \begin{lem}\label{superM} Suppose that $v \in L^p_{loc}\bigl(0,T;W^{1,p}_{loc}(\Omega)\bigr)$ is a weak supersolution in $\Omega_T$ and $0 < v(x,t) \leq M.$ Then
      \begin{align*} \int\limits_{t_1}^{t_2}\!\!\!\int\limits_{\Omega} \zeta^p|\nabla v |^{p}  \d x \d t \,\leq &\, p^{\,p}M^p\int\limits_{t_1}^{t_2}\!\!\!\int\limits_{\Omega} |\nabla \zeta |^{p}  \d x \d t\\        &+\,2pM^p\,\int\limits_{t_1}^{t_2}\!\!\!\int\limits_{\Omega}\Bigl\vert\frac{\partial \zeta^p}{\partial t}\Bigr\vert\,\d x \d t\,+\,4pM^p\,|\Omega|
      \end{align*}
      where $0\le \zeta\le 1$ is a test function compactly supported in the $x$-variable. In particular, $v$ belongs to $ L^p\bigl(0,T;W^{1,p}_{loc}(\Omega)\bigr).$
    \end{lem}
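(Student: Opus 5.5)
Testing the weak formulation of a supersolution with a function of the form $\zeta^p\,(M-v)$ gives the energy bound, in the same spirit as Lemma~\ref{subenergy}; positivity of $v$ will not even be needed. The natural test function is $\phi=\zeta^p(M-v)\ge 0$, made rigorous through Stekloff averages exactly as in the proof of Lemma~\ref{subenergy}. Concretely, we write the regularized inequality with $[v^{p-1}]_h$ and $[|\nabla v|^{p-2}\nabla v]_h$. We take $\phi=\zeta^p\bigl(M-[v^{p-1}]_h^{1/(p-1)}\bigr)$, or a simpler variant such as $\zeta^p(M^{p-1}-[v^{p-1}]_h)$ if the time terms then close more easily. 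Since $0<v\le M$, the test function is nonnegative, bounded, and has locally bounded time derivative for fixed $h$. Afterwards we let $h\to0$.

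For the elliptic part, the inequality for supersolutions gives
$$\int\!\!\int\zeta^p|\nabla v|^p \,\leq\, \int\!\!\int p\zeta^{p-1}(M-v)|\nabla v|^{p-1}|\nabla\zeta| \,+\, (\text{time terms}).$$
Young's inequality, as in Lemma~\ref{subenergy}, absorbs $\tfrac1q\zeta^p|\nabla v|^p$ into the left side and leaves $\tfrac{p^p}{p}(M-v)^p|\nabla\zeta|^p\le \tfrac{p^p}{p}M^p|\nabla\zeta|^p$. Multiplying through by $p$ yields the first term $p^pM^p\int\!\!\int|\nabla\zeta|^p$.

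For the time part, consider the expression $\bigl|_{t_1}^{t_2}\int\zeta^p(M-v)v^{p-1}\,\d x-\int\!\!\int v^{p-1}\partial_t(\zeta^p(M-v))$. Formally, $v^{p-1}\partial_t v=\tfrac1p\partial_t v^p$, so it integrates to
$$\bigg\vert_{t_1}^{t_2}\int\zeta^p\Bigl(Mv^{p-1}-\tfrac{p-1}{p}v^p\Bigr)\d x\,-\,\int\!\!\int\Bigl(Mv^{p-1}-\tfrac{p-1}{p}v^p\Bigr)\partial_t\zeta^p.$$
Every integrand here is bounded in absolute value by $2M^p\zeta^p$, or by $2M^p|\partial_t\zeta^p|$. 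The boundary-in-time terms are therefore at most $2\cdot 2M^p|\Omega|$, and after multiplying by $p$ this produces $4pM^p|\Omega|$ together with $2pM^p\int\!\!\int|\partial_t\zeta^p|$. The last assertion then follows by choosing $\zeta=\zeta(x)$, a cutoff, and letting $t_1\to0$, $t_2\to T$.

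The main obstacle is the rigorous passage $h\to0$ in the time terms. We need the product $\partial_t[v^{p-1}]_h$ times the nonlinear expression in $[v^{p-1}]_h$ to integrate exactly to a function of $\beta_h=[v^{p-1}]_h$. This is why the test function is written in terms of $\beta_h$ itself: with $\beta=v^{p-1}$ and $u=\beta^{q-1}$, we have $\partial_t\beta_h\cdot\beta_h^{q-1}=\tfrac1q\partial_t\beta_h^q$. We then use $\beta_h\to\beta$ in $L^1$, which holds since $v$ is continuous and bounded, and the weak convergence of the Stekloff-averaged flux, just as in Lemma~\ref{subenergy}.
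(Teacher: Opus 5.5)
Your proposal is correct and follows the paper's own argument. You use the test function $\zeta^p(M-v)$, made rigorous as $\zeta^p\bigl(M-\beta_h^{q-1}\bigr)$ with $\beta_h=[v^{p-1}]_h$ in the Stekloff-regularized inequality, then apply Young's inequality to the elliptic part and a crude bound to the time part. Your bookkeeping reproduces exactly the constants $p^pM^p$, $2pM^p$, $4pM^p|\Omega|$, whose derivation the paper omits.

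There are three minor points of precision. First, as $h\to0$ you need the \emph{strong} $L^{p/(p-1)}_{\mathrm{loc}}$ convergence of $[|\nabla v|^{p-2}\nabla v]_h$, paired with weak $L^p$ convergence of $\nabla\beta_h^{q-1}$. Second, bounding $\partial_t\beta_h^{q-1}$ requires a positive lower bound on $\beta_h$, obtained from continuity on compacts or from the shift $\beta_h+\delta$ as in Lemma~\ref{subenergy}; the assumption $0<v\le M$ alone does not give it. Third, your ``simpler variant'' $\zeta^p(M^{p-1}-[v^{p-1}]_h)$ would not work, since it only controls the degenerate quantity $\int\!\!\int\zeta^p v^{p-2}|\nabla v|^p$.
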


    \begin{proof}
    Use the test function $\phi = \zeta^p(M-v)$ in the equation
    \begin{align*}
      \bigg\vert_{t_1}^{t_2}\int\limits_{\Omega}v^{p-1}\phi\,\d x\,-\,\int\limits_{t_1}^{t_2}\!\!\!\int\limits_{\Omega} v^{p-1}\frac{\partial \phi}{\partial t}\,  \d x \d t
      +\, \int\limits_{t_1}^{t_2}\!\!\!\int\limits_{\Omega}\langle|\nabla v|^{p-2}\nabla v,\nabla \phi \rangle \,  \d x \d t\,\geq \,0.
    \end{align*}

    A proper proof  can be based on the test function
    \begin{align*}\phi\,=\,\zeta^p\bigl(M-\beta_h^{q-1}\bigr),\quad \frac{1}{p}+\frac{1}{q}\,=\,1 \\
      \beta\,=\,v^{p-1},\quad \beta_h\,=\, [v^{p-1}]_h =\quad \text{Stekloff average},
    \end{align*}
    inserted into the regularized equation
    \begin{align*}
      \bigg\vert_{t_1}^{t_2}\int\limits_{\Omega}[v^{p-1}]_h\phi\,\d x\,-\,\int\limits_{t_1}^{t_2}\!\!\!\int\limits_{\Omega} [v^{p-1}]_h\frac{\partial \phi}{\partial t}\,  \d x \d t
      +\, \int\limits_{t_1}^{t_2}\!\!\!\int\limits_{\Omega}\langle[|\nabla v|^{p-2}\nabla v]_h,\nabla \phi \rangle \,  \d x \d t\,\geq \,0.
    \end{align*}
    Since the constants are irrelevant and  the standard proof is pretty similar to that of Lemma \ref{subenergy}, we omit the details.
\end{proof}

    The next lemma makes it possible to estimate a \emph{viscosity} supersolution from below. An explicit, and this is crucial, minorant $\omega$ is provided by Lemma \ref{omega} and the remark after it.

    \begin{lem} [$x$-\textsf{expansion}]\label{xexpansion} Let $v\geq 0$ be a viscosity supersolution in $B_1 \times (0,1)$ and suppose that $ v(x,t) > 0$ in $B_{2R}\times (0,\tau)$ for some $R < \frac{1}{2}$ and $0<\tau<	1.$ Assume that $\omega \geq 0$ is a smooth bounded weak subsolution in $(B_1\setminus \overline{B}_R)\times (0,\tau)$ such that
      \begin{equation*}
        \begin{cases}
          \omega < v\quad\text{on}\quad \partial B_R \times [0,\tau)\\
            \omega \leq v\quad\text{on}\quad  (B_1\setminus \overline{B}_R) \times \{0\}\\
            \omega = 0 \quad\text{on}\quad \partial B_1 \times [0,\tau)\\
              \Delta_p\omega \,>\,0 \quad\text{in}\quad (B_1\setminus \overline{B}_R)\times (0,\tau).
        \end{cases}\end{equation*}
    Then $\omega\leq v$ in $(B_1\setminus \overline{B}_R)\times (0,\tau).$
      \end{lem}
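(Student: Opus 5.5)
The plan is to argue by contradiction, using $\omega$ itself, shifted down by a constant, as a test function in the viscosity definition (Definition~\ref{viscdef}). The key is that $\Delta_p\omega>0$ together with $p>2$ makes the point value in the time term work in our favour. Write $A=B_1\setminus\overline{B}_R$ and fix $\tau'<\tau$. Suppose that $\omega>v$ somewhere in $A\times(0,\tau')$. To keep any maximum away from the top time $t=\tau'$, I would use the penalized function
$$\omega_\varepsilon(x,t)=\omega(x,t)-\frac{\varepsilon}{\tau'-t},$$
with $\varepsilon>0$ so small that $\omega_\varepsilon-v$ still has a positive supremum $m_\varepsilon>0$ in $A\times(0,\tau')$. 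Since $\omega_\varepsilon\to-\infty$ as $t\to\tau'$, the supremum is attained at some point $(x_0,t_0)$ of $\overline{A}\times[0,\tau')$.

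\textbf{Locating the maximum.} The next step is to show that $(x_0,t_0)$ is an interior point of $A\times(0,\tau')$.
\begin{itemize}
\item On $\partial B_1$ we have $\omega_\varepsilon-v\le -v\le 0<m_\varepsilon$.
\item At $t=0$ we have $\omega_\varepsilon-v\le \omega-v\le 0$.
\item On $\partial B_R\times[0,\tau']$ the difference $\omega-v$ is strictly negative, hence $\le -c<0$ by compactness and continuity.
\end{itemize}
Hence $\varphi=\omega_\varepsilon-m_\varepsilon$ touches $v$ from below at the interior point $(x_0,t_0)$, and $\varphi\in C^2$ near that point. The viscosity supersolution property then gives
$$(p-1)|\varphi|^{p-2}\,\partial_t\varphi\;\ge\;\Delta_p\varphi=\Delta_p\omega\quad\text{at }(x_0,t_0).$$
Here $\Delta_p$ acts only in $x$, so neither the constant $m_\varepsilon$ nor the penalty term changes it. One could instead use strict touching, Remark~1.

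\textbf{The contradiction.} At $(x_0,t_0)$ we have $\varphi=v\ge0$ and $\omega=v+m_\varepsilon+\varepsilon/(\tau'-t_0)>v\ge0$. We also have $\partial_t\varphi=\omega_t-\varepsilon/(\tau'-t_0)^2<\omega_t$. Two cases:
\begin{itemize}
\item If $\omega_t(x_0,t_0)\le0$, the left-hand side is $\le0<\Delta_p\omega$, a contradiction.
\item If $\omega_t>0$, then $p>2$ and $0\le v<\omega$ give $v^{p-2}<\omega^{p-2}$. Hence
$$(p-1)v^{p-2}\partial_t\varphi<(p-1)\omega^{p-2}\omega_t\le\Delta_p\omega,$$
where the last inequality is the classical subsolution inequality for the smooth $\omega$. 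This again contradicts the viscosity inequality.
\end{itemize}
Note that when $v(x_0,t_0)=0$ the time term simply vanishes, and the hypothesis $\Delta_p\omega>0$ is exactly what closes this degenerate case. This is why no weak theory of $v$ near its zero set is needed. Therefore $\omega_\varepsilon\le v$ for every small $\varepsilon$. Letting $\varepsilon\to0$ and then $\tau'\to\tau$ gives $\omega\le v$ in $A\times(0,\tau)$.

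\textbf{Main obstacle.} The delicate points are justifying that the touching is in a full space-time neighbourhood, which is what the penalization $\varepsilon/(\tau'-t)$ is for, and excluding the inner boundary. On $\partial B_R\times[0,\tau)$ the strict inequality $\omega<v$ is given (the positivity of $v$ on $B_{2R}\times(0,\tau)$ is what makes such an $\omega$ available in applications). Restricting to the compact time interval $[0,\tau']$ is what yields a uniform gap there.
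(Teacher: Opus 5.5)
Your proof is correct and takes essentially the paper's route. You shift the time-penalized barrier down by the positive maximum of its difference with $v$ and use it as a test function at an interior touching point. The degenerate case $v=0$ is closed by $\Delta_p\omega>0$, and otherwise $v^{p-2}<\omega^{p-2}$ is combined with $\partial_t\omega^{p-1}\le\Delta_p\omega$. The only difference is cosmetic: the paper penalizes multiplicatively with $e^{-\varepsilon/(\tau-t)}\omega$, which keeps $\omega_\varepsilon$ a nonnegative strict subsolution so it can compare $v$ with $\omega_\varepsilon$, while your additive penalty $-\varepsilon/(\tau'-t)$ makes you compare with $\omega$ itself and split on the sign of $\omega_t$, which works equally well.
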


      \begin{proof}
      For $\e >0$,  we consider
      $$\omega_{\e}(x,t)\,=\,e^{- \e/(\tau-t)}\omega(x,t).$$ 
      Clearly, $\Delta_p\,\omega \,>\,0.$ Since $\partial_t\omega^{p-1} \leq \Delta_p\omega,$
      \begin{equation}\label{eqna}\partial_t\omega_{\e}^{p-1}\, \leq\,\Delta_p\,\omega_{\e} - \e\,\frac{p-1}{(\tau-t)^2}\,\omega_{\e}(x,t)^{p-1}
      \end{equation}
      in  $(B_1\setminus \overline{B}_R)\times (0,\tau).$

      It is sufficient to show that $v \geq \omega_{\e}$ for all small $\e > 0$. Suppose now, on the contrary, that the difference $\omega_{\e}-v$ attains a \emph{strictly positive}
      maximum $c = c_{\e}$ at the point
      $$(\hat{x},\hat{t}) \in \overline{B_1\setminus B_R} \times [0,\tau].$$
Taking into account the boundary conditions and $\om_\eps\to 0$ as $t\to \tau$,  it holds that $(\hat{x},\hat{t})\in (B_1\setminus \overline B_R) \times (0,\tau)$ and inequality (\ref{eqna}) remains valid there.

      Now
      $$\omega_{\e}-v\,\leq \, c\,=\, \omega_{\e}(\hat{x},\hat{t})-v(\hat{x},\hat{t})$$
      and so the definition of viscosity supersolutions requires that
      $$\partial_t\left(|\omega_\varepsilon-c|^{p-2}(\omega_\varepsilon-c)\right)=(p-1)|\omega_\varepsilon-c|^{p-2}\partial_t\omega_\varepsilon
\ge \Delta_p\omega_\varepsilon \quad\text{at}\quad(\hat{x},\hat{t}).$$
      Since $\omega_{\e}-c = v \geq 0$ at the point $(\hat{x},\hat{ t}),$ 
      \begin{equation}\label{eqnb}        (p-1)v(\hat{x},\hat{t})^{p-2}\partial_t\omega_{\e}(\hat{x},\hat{t})\,\geq\,\Delta_p\,\omega_{\e}(\hat{x},\hat{t}).
      \end{equation}
      On the other hand, (\ref{eqna}) yields
      $$ (p-1)\omega_{\e}(\hat{x},\hat{t})^{p-2}\partial_t\omega_{\e}(\hat{x},\hat{t})\,<\, \Delta_p\,\omega_{\e}(\hat{x},\hat{t}),$$
          since $$\omega_{\e}(\hat{x},\hat{t}) >v(\hat{x},\hat{t})\geq 0.$$ It follows that
          \begin{equation}\label{eqnc}          \partial_t\omega_{\e}(\hat{x},\hat{t})\, <\,\frac{1}{(p-1)\omega_{\e}(\hat{x},\hat{t})^{p-2}}\, \Delta_p\,\omega_{\e}(\hat{x},\hat{t}).
          \end{equation}
          Since
          $$\Delta_p \,\omega_{\e}\, =\, \exp\left(-\frac{\e(p-1)}{\tau-t}\right)\,\Delta_p\, \omega > 0$$
          by assumption, inequality (\ref{eqnb}) implies $v(\hat{x},\hat{t}) > 0.$ Combining (\ref{eqnb}) and (\ref{eqnc}) we end up with
          \begin{align*}
            \Delta_p\,\omega_{\e}(\hat{x},\hat{t})\,\leq&\, (p-1)v(\hat{x},\hat{t})^{p-2}\partial_t\omega_{\e}(\hat{x},\hat{t})\\
            <&\, \frac{v(\hat{x},\hat{t})^{p-2}}{\omega_{\e}(\hat{x},\hat{t})^{p-2}}\,\Delta_p\,\omega_{\e}(\hat{x},\hat{t}) .\end{align*}
          This is a contradiction, because $v(\hat{x},\hat{t}) < \omega_{\e}(\hat{x},\hat{t})$ and so the fraction is $< 1$. Therefore $\omega_{\e} -v \leq 0.$
\end{proof}

          \begin{cor} [\textsf{Expansion to $\Omega \times \{t_0\}$}] \label{horizontal} Let $p > 2$. Let $v\geq 0$ be a continuous viscosity supersolution in $\Omega_T$. If $v(x_0,t_0) > 0$ at some interior point $(x_0,t_0) \in \Om_T$, then $v(x,t_0) > 0$ for all $x \in \Om.$
          \end{cor}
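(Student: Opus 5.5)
The plan is to use Lemma~\ref{xexpansion} with the explicit barrier $\omega_{R_1,R_2}$ of the Remark after Lemma~\ref{omega}, and to cover $\Omega$ by a chain of balls. Put
$$P\,=\,\{x\in\Omega:\ v(x,t_0)>0\}.$$
By continuity $P$ is open, and $x_0\in P$. Since $\Omega$ is connected, it suffices to show that $P$ is relatively closed in $\Omega$. So let $y\in\Omega$ be a limit point of $P$. The aim is to show $y\in P$.

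\textbf{Setting up the barrier.} Choose $\rho>0$ with $B_{4\rho}(y)\subset\Omega$, and then a point $z\in P$ with $|z-y|<\rho/4$. By continuity there are $r\in(0,\rho/8)$ and $s>0$ such that
$$v\,\geq\, m\,>\,0\quad\text{in}\quad \overline{B_{2r}(z)}\times[t_0-s,t_0].$$
Translate $z$ to the origin and the time $t_0-\sigma$ to $0$, where $\sigma\le s$ will be fixed below. Apply Lemma~\ref{xexpansion}, after the scaling used in the Remark, in the annulus $(B_{R_2}\setminus\overline{B_{R_1}})\times(0,\tau)$ with $R_1=r$ and $R_2=\rho$. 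The subsolution is $\kappa\,\omega_{R_1,R_2}$ for a small constant $\kappa>0$. Multiplying by a constant is allowed because the equation is homogeneous of degree $p-1$, so $\kappa\,\omega_{R_1,R_2}$ is still a subsolution and still satisfies $\Delta_p>0$.

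\textbf{Checking the hypotheses of Lemma~\ref{xexpansion}.}
\begin{itemize}
\item $v>0$ near the inner ball: this holds in $B_{2R_1}\times(0,\tau)$ for $\tau\le\sigma$.
\item $\omega<v$ on $\partial B_{R_1}$: this holds once $\kappa R_2<m$.
\item $\omega=0$ on $\partial B_{R_2}$: this is built into $\omega_{R_1,R_2}$.
\item $\Delta_p\omega>0$: this is given by Lemma~\ref{omega}.
\item $\omega\le v$ at time $0$: this is automatic. As $t\to0^+$ we have $\exp\bigl(-|x|R_2^{1/(p-1)}t^{-1/(p-1)}\bigr)\to0$ uniformly for $|x|\ge R_1$, so the barrier vanishes at $t=0$ and $v\ge0$ there.
\end{itemize}

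\textbf{Fixing the time parameter and concluding.} Choose $\sigma=\min\{s,\tau/2\}$, where $\tau=\tau(R_1,R_2)$ is the lifetime from the Remark. The time $t_0$ then corresponds to $t=\sigma\in(0,\tau)$, and the lemma gives
$$v(x,t_0)\,\geq\,\kappa\,\omega_{R_1,R_2}(x-z,\sigma)\,>\,0\qquad\text{for } |x-z|<R_2=\rho.$$
Since $|y-z|<\rho$, this gives $v(y,t_0)>0$, so $y\in P$ and $P$ is closed. One needs $t_0-\sigma>0$, which holds because $s$ may be taken smaller than $t_0$. The smallness of $\tau$ is harmless: the barrier is only needed up to a time $\sigma$ that we are free to pick small.

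\textbf{Main obstacle.} The delicate point is matching the time window. The barrier lives only on $(0,\tau)$ with $\tau$ depending on $R_1$ and $R_2$, and positivity of $v$ near $z$ is known only on some interval $[t_0-s,t_0]$. Starting the barrier at the shifted initial time $t_0-\sigma$ and relying on its vanishing at $t=0$ resolves this. The other point to watch is the requirement in Lemma~\ref{xexpansion} that $v>0$ on $B_{2R}$ (twice the inner radius); choosing $r<\rho/8$ with positivity on $B_{2r}(z)$ handles it. For $n=1$ we have $\tau=\infty$ and the same argument applies.
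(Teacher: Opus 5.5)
Your strategy is the paper's. You insert the barrier $\kappa\,\omega_{R_1,R_2}$ into Lemma~\ref{xexpansion} with its initial time placed just before $t_0$, so that its vanishing initial trace makes the bottom condition automatic, and you then propagate positivity through $\Omega$. Your open--closed connectedness argument is a repackaging of the paper's chain of balls.

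There is one bookkeeping error in the time window. Lemma~\ref{xexpansion} requires $\kappa\,\omega_{R_1,R_2}<v$ on $\partial B_{R_1}(z)$ throughout the barrier's lifespan. In real time this is the interval $[t_0-\sigma,\,t_0-\sigma+\tau)$, which must also lie in $(0,T)$. You only know $v\ge m$ on $[t_0-s,t_0]$, which is why your checklist says ``for $\tau\le\sigma$''. But you then choose $\sigma\le\tau/2$, so the barrier runs at least $\tau/2$ beyond $t_0$. There you have no lower bound for $v$ on the inner sphere: continuity gives one only for some short time, not up to $t_0+\tau/2$. For $n=1$, where $\tau=\infty$, the lifespan does not even stay inside $(0,T)$.

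The remedy is to shorten the lifespan. All hypotheses of Lemma~\ref{xexpansion}, and its proof via $e^{-\varepsilon/(\tau'-t)}\omega$, work on $(0,\tau')$ for any $\tau'\le\tau$. You can finish in either of two ways.
\begin{enumerate}
\item Apply the lemma on $(0,\sigma)$ with $\sigma=\min\{s,\tau\}$, so the barrier lives on $(t_0-\sigma,t_0)$. Then let $t\uparrow\sigma$, using the continuity of $v$ and of $\omega_{R_1,R_2}$; the latter is positive at $t=\sigma$ in the open annulus.
\item Do as the paper does. Use continuity at the interior point to get $v\ge m$ on $\overline{B_{2r}(z)}\times[t_0-s,t_0+s]$, and take $\tau'=\min\{s,\tau\}$ with $\sigma=\tau'/2$.
\end{enumerate}
With this correction your argument is complete.
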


          \begin{proof}
          If $v(x_0,t_0) > 0$, then $v(x,t) > 0$ in $\overline{B(x_0,R_1)} \times (t_0-\delta,t_0 + \delta),$ say. Inserting  a suitable barrier $ \lambda \omega_{R_1,R_2}$ from Lemma \ref{omega} into  Lemma \ref{xexpansion}, we see that $v(x,t) > 0$ in $B(x_0,R_2) \times (t_0-h,t_0+h)$ for some $h > 0$, if $\overline{B(x_0,R_2)} \subset \Om.$ It follows  that $v(x,t_0) > 0$ in $B(x_0,\dist(x_0,\partial \Om)).$ Thus the positivity has expanded in the $x$-direction at time $t_0$. 
          
          We shall iterate the procedure along a standard chain of overlapping balls to extend it to the whole domain. Fix an arbitrary point $y \in \Om.$ We claim that $v(y,t_0) > 0.$ To see this, let $\gamma \subset \Om$ be a path joining the points $x_0$ and $y.$ Let $4 \rho = \dist(\gamma, \partial \Om).$  We construct a suitable chain of finitely many pairwise overlapping balls:
          \begin{equation*}
            \gamma\, \subset\, \overset{N}{\underset{i=1}{\bigcup}}B(x_i,\rho),\quad x_i \in \gamma.
          \end{equation*}
          If we have reached that $v(x_i,t_0) > 0$, then $v(x,t_0) > 0$ in $B(x_i,4\rho)$. Certainly the next center, say $x_{i+1}$ belongs to this enlarged ball and so $v(x_{i+1},t_0) > 0$. Repeating the procedure we finally reach the point $y$.
\end{proof}  

          It follows that the zero set is of the form
          $$\{v=0\}\,=\, \underset{\iota}{\bigcup}\,\Om \times I_{\iota},$$
          where the $I_{\iota}$ are closed  time intervals or points, and so the positivity set
          $$\{v>0\}\,=\, \underset{\iota}{\bigcup}\,\Om \times (t^-_{\iota},t^+_{\iota})$$
          with open time intervals.
 This structure is decisive for the use of the barrier in the next lemma.

          \begin{lem}[\textsf{$t$-expansion}]\label{time}
            Let $v\geq 0$ be a continuous viscosity supersolution in $\Omega_T$.
            If $v(x_0,t_0)>0$ at some $(x_0,t_0)\in\Omega_T$, then
            $v(x_0,t)>0$ when $t_0\leq t<T$.
          \end{lem}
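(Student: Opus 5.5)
We argue by contradiction, using the structure of the positivity set obtained after Corollary~\ref{horizontal}. Suppose $v(x_0,t_0)>0$ but $v(x_0,t)=0$ for some $t\in(t_0,T)$. By Corollary~\ref{horizontal}, $v(\cdot,t_0)>0$ in all of $\Om$. Let $t^+$ be the first time after $t_0$ at which $v(x_0,\cdot)$ vanishes. Corollary~\ref{horizontal} applied at each time in $[t_0,t^+)$ gives $v>0$ in $\Om\times[t_0,t^+)$. By the same corollary, $v(x,t^+)=0$ for every $x\in\Om$: if $v(x,t^+)>0$ for some $x$, positivity would spread to $(x_0,t^+)$. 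Thus $v>0$ in $\Om\times(t_0,t^+)$ and $v\equiv0$ on $\Om\times\{t^+\}$. The task is to show that a positive supersolution cannot be extinguished simultaneously on the whole slice.

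For this we use the separable weak solution $V(x,t)=e^{-\lambda t/(p-1)}\psi_\lambda(|x-x_0|)$ built from the first eigenfunction in a ball $B_{R}=B(x_0,R)\Subset\Om$. Shift time so that it starts at $t_0$, and scale it by a small constant $\mu>0$. Choosing $\mu$ small, continuity and compactness give
\[
\mu V(\cdot,t_0)\le v(\cdot,t_0)/2 \quad \text{on } \overline{B_R}.
\]
On the lateral boundary $\partial B_R\times[t_0,t^+]$ we have $\mu V=0\le v$. We then want to conclude that $\mu V\le v$ in $B_R\times(t_0,t^+)$. Letting $t\to t^+$ would then give
\[
0=v(x_0,t^+)\ge \mu e^{-\lambda (t^+-t_0)/(p-1)}\psi_\lambda(0)>0,
\]
which is a contradiction. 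Note that the exponential decay of $V$ is the right rate: the solution does not become extinct in finite time.

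The comparison is the main obstacle. Inside $\Om\times(t_0,t^+)$ the function $v$ is positive, so by \cite{LPS} it is a weak supersolution there. By Lemma~\ref{superM}, restricted to subcylinders $B_R\times(t_0,s)$ with $s<t^+$ where $v\ge c_s>0$, it lies in the needed Sobolev class. We would then apply the Gap comparison, Theorem~\ref{gap}, on $B_R\times(t_0,s)$. The gap fails on $\partial B_R$, because $V=0$ there while $v\ge c_s$; so we do have $v-\mu V\ge c_s>0$ on the lateral boundary. At the initial time the gap holds by the factor $1/2$. Hence Theorem~\ref{gap} yields $\mu V\le v$ on $B_R\times(t_0,s)$ for every $s<t^+$.

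It remains to check the hypotheses of Theorem~\ref{gap}, namely $V\in C(\overline{B_R}\times[t_0,s])\cap L^p(W^{1,p})$ and $V$ a weak subsolution. These hold by the discussion of the barriers, since $V$ is a weak solution including the origin. Since $s<t^+$ is arbitrary, continuity of $v$ at $(x_0,t^+)$ gives the contradiction above. The only delicate point is that the positivity set is a full slab $\Om\times(t_0,t^+)$, so that Theorem~\ref{gap} is used only where $v$ is known to be a weak supersolution with a strictly positive lower bound. This is exactly what Corollary~\ref{horizontal} supplies.
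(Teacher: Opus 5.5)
Your proposal is correct and is essentially the paper's proof: you take the first zero $t^+$ of $v(x_0,\cdot)$, use Corollary~\ref{horizontal} to get $v>0$ on $\Om\times[t_0,t^+)$, compare $v$ with the scaled eigenfunction barrier $\mu e^{-\lambda(t-t_0)/(p-1)}\psi_\lambda(|x-x_0|)$ via Theorem~\ref{gap} on $B(x_0,R)\times(t_0,s)$ for every $s<t^+$, and let $s\to t^+$ to contradict $v(x_0,t^+)=0$. One wording slip: on $\partial B_R$ the gap \emph{holds} rather than ``fails'', since $v-\mu V\ge c_s>0$ there, as you go on to say yourself.
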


          \begin{proof}
          By the previous corollary, $v(x,t_0) > 0$ when $x \in \Om$.

          Let $B(x_0,R) \subset \subset \Om$ and choose a suitable barrier
          $$V(r,t)\,=\, C\,e^{-\frac{\lambda(t-t_0)}{p-1}}\psi_{\lambda}(r),\qquad r = |x-x_0|$$
          so that $v(x,t_0) > V(r,t_0) > 0,\ r =|x-x_0|,$  when $x \in B(x_0,R).$ Here $\psi_\lambda$ is the positive first eigenfunction in (\ref{eigenfunction}), and $\lambda = \lambda(R) > 0$ is its eigenvalue. On $\partial B(x_0,R) \times [t_0,\infty)$, we have $V=0$. In the set where $v > 0$, we know that $v$ is a weak supersolution. Hence the Comparison Principle (with a gap) in Theorem \ref{gap} is valid in $B(x_0,R) \times [t_0,t_1]$, provided that this space$\times$time cylinder is comprised in the positivity set, and we will choose next the cylinder in such a way that it is.

            If it so happens that $v(x_0,t) = 0$ at some time $t > t_0$, let $t^{\star}$ be the first zero ( $ t^{\star} =\inf\{t > t_0|\, v(x_0,t) = 0\}$). By the previous considerations, $v > 0$ in $\Om \times [t_0,t^{\star})$ (and especially in $\overline{B(x_0,R)} \times [t_0,t^{\star})$).
              Therefore the Comparison Principle  in Theorem \ref{gap} is applicable here and so  $v \geq V$ in $B(x_0,R) \times [t_0,t^{\star}-\delta]$, for all small $\delta > 0$, and so in $B(x_0,R) \times [t_0,t^{\star})$. By continuity, $v(x_0,t^{\star}) \geq V(0,t^{\star})> 0.$ This strict inequality is a contradiction. We conclude that $v(x_0,t) > 0$ when $t_0 \leq t <T$.
\end{proof}

                  Excluding the case $v \equiv 0$ we have reduced the problem with non-negative viscosity
                  supersolutions to the following situation. For some $t_0 \in [0,T)$,
                    \begin{equation*}\begin{cases}
                        v(x,t) = 0\quad\text{if}\quad x\in \Om,\quad 0<t \leq t_0,\\
                        
                        v(x,t) > 0\quad\text{if}\quad x\in \Om,\quad t_0 < t < T. \end{cases}
                    \end{equation*}
                    It follows that $v$ is also a weak supersolution except possibly on one  time slice $t=t_0$.

                    \begin{cor}[\textsf{Zero extension to the past}]\label{past}
                   Suppose that $v$ is bounded and continuous in $\Omega_T$,
                    that $v=0$ in $\Omega\times(0,t_0]$, and that $v>0$ in $\Omega\times(t_0,T)$.
                    If $v$ is a weak supersolution in $\Omega\times(t_0,T)$,
                    then $v$ is a weak supersolution in $\Omega_T$.
                    Furthermore, $v\in L^p\bigl(0,T;W^{1,p}_{loc}(\Omega)\bigr)$.
                    \end{cor}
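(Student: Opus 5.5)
We first establish the Sobolev regularity and then verify the integral inequality (\ref{weaksuper}) for every $\phi\in C_0^\infty(\Omega_T)$, $\phi\ge 0$. Let $0\le v\le M$. On $\Omega\times(t_0,T)$ we use Lemma \ref{superM}, which needs strict positivity. It applies on every cylinder $\Omega\times(t_1,t_2)$ with $t_0<t_1<t_2<T$. Take $\zeta=\zeta(x)$, so that $\partial_t\zeta^p=0$. The resulting bound
$$\int_{t_1}^{t_2}\!\!\int_\Omega \zeta^p|\nabla v|^p\,\d x\,\d t\le C M^p\Bigl(T\int_\Omega|\nabla\zeta|^p\,\d x+|\Omega|\Bigr)$$
is independent of $t_1$. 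Letting $t_1\downarrow t_0$ and $t_2\uparrow T$ gives $\nabla v\in L^p(t_0,T;L^p_{loc})$. On $\Omega\times(0,t_0]$ we have $v\equiv0$, so $\nabla v=0$ there. Since the slice $\{t=t_0\}$ has measure zero, we get $v\in L^p(0,T;W^{1,p}_{loc}(\Omega))$. Here we use that $v(\cdot,t)\in W^{1,p}_{loc}$ for a.e.\ $t$, which is a slicewise statement and needs no gluing in time.

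For the inequality, we cut off in time near $t_0$. Fix $\phi\ge0$ and let $\chi_k(t)$ be a Lipschitz function with $\chi_k=0$ for $t\le t_0$, $\chi_k=1$ for $t\ge t_0+1/k$, linear in between, and $\chi_k'=k$ on $(t_0,t_0+1/k)$. The function $\phi\chi_k$ is admissible in $\Omega\times(t_0,T)$ after a standard mollification, because the weak formulation extends to Lipschitz test functions with compact support. Testing there gives
$$\iint\Bigl(-v^{p-1}\chi_k\phi_t+\chi_k\<|\nabla v|^{p-2}\nabla v,\nabla\phi\>\Bigr)\,\d x\,\d t\;\ge\;\iint v^{p-1}\phi\,\chi_k'\,\d x\,\d t\;\ge\;0.$$
The key point is that the right-hand term has the favourable sign: $v\ge0$, $\phi\ge0$ and $\chi_k'\ge0$. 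We can therefore simply discard it, with no need to control $k\int_{t_0}^{t_0+1/k}v^{p-1}\phi$ (which tends to $0$ anyway, by continuity and $v(\cdot,t_0)=0$). Now let $k\to\infty$. By dominated convergence, using $v\le M$ and the gradient bound above, the left-hand side tends to the integral over $(t_0,T)$. The integrand vanishes identically on $(0,t_0)$, because $v=0$ and $\nabla v=0$ there. Hence we obtain (\ref{weaksuper}) over all of $\Omega_T$.

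The only delicate step is justifying the Lipschitz-in-time test function $\phi\chi_k$ in $\Omega\times(t_0,T)$. We will handle it by mollifying $\chi_k$ in $t$ while keeping its support inside $(t_0,T)$ and keeping the derivative nonnegative; the sign argument survives this. An alternative is the Stekloff-average formulation (\ref{zetaeq}) on $[t_1,t_2]$ with $t_1\downarrow t_0$, where the boundary term $\int_\Omega\zeta v^{p-1}(\cdot,t_1)\,\d x\to0$ by continuity of $v$ and $v(\cdot,t_0)=0$. Boundedness of $v$ is used only for the dominated convergence and for Lemma \ref{superM}.
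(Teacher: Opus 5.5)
Your proof is correct and follows the paper's argument: Lemma \ref{superM} gives the gradient bound on $(t_0,T)$, and you then test with $\phi$ times the piecewise linear time cut-off (the paper's $\eta$, your $\chi_k$) and let the ramp width shrink. The only difference is that you drop the term $\iint v^{p-1}\phi\,\chi_k'\,\d x\,\d t\geq 0$ because of its sign, whereas the paper passes to the limit and identifies it as $\int_\Omega v(x,t_0)^{p-1}\phi(x,t_0)\,\d x=0$ using continuity; your variant shows that continuity at $t=t_0$ is not actually needed for this step.
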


                    \begin{proof}
                                     By Lemma \ref{superM}, $v$ belongs to $L^p\bigl(t_0,T;W^{1,p}_{loc}(\Om)\bigr)$.
                    Hence $v \in L^p\bigl(0,T;W^{1,p}_{loc}(\Om)\bigr)$. Fix a test function $\phi \geq 0, \, \phi \in C_0^{\infty}(\Omega_T)$, and define
                    \begin{align*}
                      \eta(t)\,=\,
                      \begin{cases}
                      1,\quad t \geq t_0 + \e\\
                      \frac{t-t_0}{\e},\quad t_0 <t < t_0 +\e\\
                      0,\quad t \leq t_0 \end{cases}
                    \end{align*}
                    for some $\e > 0$\footnote{We omit for simplicity two approximation procedures, as quite common in the literature, the smooth approximation of $\eta$, and its definition first for $t_1>t_0$, so that the test function would be in $ C_0^{\infty}(\Om\times (t_0,T))$.}. Then we see that
                    \begin{align*}
                      0\,&\geq\,\int\limits_{0}^{T}\!\!\!\int\limits_\Om \Bigl(v^{p-1}\frac{\partial\,}{\partial t}\,(\eta\phi)-\langle|\nabla v|^{p-2}\nabla v,\nabla(\eta \phi)\rangle\Bigr)\,\d x \d t\\
                      &=\int\limits_{0}^{T}\!\!\!\int\limits_\Om \eta \Bigl(v^{p-1}\frac{\partial\phi}{\partial t}-\langle|\nabla v|^{p-2}\nabla v,\nabla \phi\rangle\Bigr)\,\d x \d t + \frac{1}{\e} \int\limits_{t_0}^{t_0+\e}\!\!\!\int\limits_\Om\,v^{p-1}\phi \d x \d t\\
                      &\hspace{-1 em}\underset{\e \to 0}{\longrightarrow}\,
                      \int\limits_{0}^{T}\!\!\!\int\limits_\Om  \Bigl(v^{p-1}\frac{\partial\phi}{\partial t}-\langle|\nabla v|^{p-2}\nabla v,\nabla \phi\rangle\Bigr)\,\d x \d t   +  \int\limits_\Om v(x,t_0)^{p-1}\phi(x,t_0)\, \d x.
                    \end{align*}
                    Since the very last integral is zero (continuity of $v$ was used), we have arrived at the desired differential inequality, from which we can read off that $v$ is a weak supersolution in the whole $\Om_T$.
\end{proof}

Thus we can extend the weak supersolution property of viscosity
supersolutions across $t=t_0$, resolving the remaining issue noted
after Lemma \ref{time}. We have proved the following.
                    \begin{samepage}
                    \begin{prop} Let $p > 2$. A continuous viscosity supersolution $v \geq 0$ is also a weak supersolution. If it is bounded, then
                      $v \in L^p\bigl(0,T;W^{1,p}_{loc}(\Om)\bigr)$.
                    \end{prop}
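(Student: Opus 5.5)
The plan is to assemble the preceding results into a case analysis driven by the structure of the zero set. Let $v\geq 0$ be a continuous viscosity supersolution in $\Omega_T$.

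\emph{Structure of the zero set.} First I would settle its shape. By Corollary~\ref{horizontal}, positivity at one point of a time slice spreads to the whole slice. By Lemma~\ref{time}, positivity at time $t_0$ persists for all $t\geq t_0$. Hence the set of times $\{t: v(\cdot,t)>0\}$ is an interval of the form $(t_0,T)$ or $[t_0,T)$. Since $v$ is continuous, the zero set $\{v=0\}$ is relatively closed, so the positivity times form an open set. Thus exactly one of three cases occurs:
\begin{itemize}
\item $v\equiv 0$;
\item $v>0$ in all of $\Omega_T$;
\item for some $t_0\in(0,T)$, $v=0$ in $\Omega\times(0,t_0]$ and $v>0$ in $\Omega\times(t_0,T)$.
\end{itemize}

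\emph{The three cases.} In the first case $v\equiv 0$ is trivially a weak solution. In the second case $v$ is a strictly positive viscosity supersolution, and the result of \cite{LPS} quoted above makes it a weak supersolution in the open set $\{v>0\}=\Omega_T$. In the third case the same result gives the weak supersolution property in $\Omega\times(t_0,T)$. Corollary~\ref{past} then extends it across the slice $t=t_0$ to all of $\Omega_T$.

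\emph{Locality and boundedness.} One wrinkle is that Corollary~\ref{past} and Lemma~\ref{superM} assume boundedness, whereas the first claim of the proposition does not. Being a weak supersolution is a local property. So I would work in subcylinders $\Omega'\times(a,b)\Subset\Omega_T$, on which $v$ is bounded by continuity, and apply the case analysis there. The structure of the zero set restricts to such subcylinders, and both the eigenfunction barrier and the $\omega$-barrier arguments were already local. Since the required Sobolev regularity $L^p_{loc}(0,T;W^{1,p}_{loc}(\Omega))$ is itself local, this gives that $v$ is a weak supersolution in $\Omega_T$.

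\emph{Global-in-time integrability.} When $v\leq M$, Lemma~\ref{superM} applied on $(t_0,T)$ (or on $(0,T)$ in the second case) gives $v\in L^p(t_0,T;W^{1,p}_{loc}(\Omega))$ with a bound independent of the endpoints. On $(0,t_0]$ we have $\nabla v=0$. Together these give $v\in L^p(0,T;W^{1,p}_{loc}(\Omega))$.

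\emph{Main obstacle.} The delicate step is the passage across $t=t_0$, that is, justifying Corollary~\ref{past} in a subcylinder with the cutoff $\eta$. There the boundary term $\int_\Omega v(x,t_0)^{p-1}\phi(x,t_0)\,\d x$ vanishes only because $v(\cdot,t_0)=0$ and $v$ is continuous. The two approximation procedures omitted in its proof (smoothing $\eta$, and starting from $t_1>t_0$) must be carried out together with uniform gradient bounds from Lemma~\ref{superM} near $t_0$; these bounds are what make the limit $\e\to0$ legitimate.
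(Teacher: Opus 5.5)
Your proposal is correct and follows essentially the paper's route: the zero-set structure from Corollary~\ref{horizontal} and Lemma~\ref{time}, the result of \cite{LPS} on $\{v>0\}$, Corollary~\ref{past} to cross $t=t_0$, and Lemma~\ref{superM} together with $\nabla v=0$ on $(0,t_0]$ for the integrability. Localizing to subcylinders to handle unbounded $v$ is a sensible detail the paper leaves implicit; note also that in Corollary~\ref{past} the term $\frac{1}{\e}\int_{t_0}^{t_0+\e}\!\int_\Omega v^{p-1}\phi$ is nonnegative, which is the favorable sign for a supersolution, so its vanishing is not really the crux you make it out to be.
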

\par\end{samepage}

Given the subsolution result in Theorem \ref{viscsubweak},
this proves Theorem~\ref{thmI}.

\medskip
\noindent\textbf{Remark.} The following applications are immediate.
Theorems \ref{thmI} and \ref{thmII} give the equivalence of weak and viscosity solutions in the non-negative case. Thus non-negative viscosity solutions are locally H\"older continuous and satisfy Harnack's Inequality, since these properties hold for weak solutions by \cite[Theorems~1.1 and~B.1]{BDL}. For the Harnack Inequality for positive weak solutions, see also \cite{KK}, \cite{GV}, and  \cite{IMM}.

  \vspace{1 ex}
{\small \textsf{Acknowledgements}: MP acknowledges support from the Research Council of Finland, project 360185.  
}

\bigskip
\small{

\begin{samepage}
\noindent \textsf{Peter Lindqvist\\ Department of
   Mathematical Sciences\\ Norwegian University of Science and
  Technology\\ N--7491 Trondheim, Norway}\\
\textsf{e-mail}: peter.lindqvist@ntnu.no
\par\end{samepage}

 \bigskip
\begin{samepage}
\noindent {\textsf{Mikko Parviainen \\  Department of Mathematics and Statistics\\ University of Jyv\"askyl\"a \\ 
FI-40014 Jyv\"askyl\"a, Finland}  \\
  \textsf{e-mail}: mikko.j.parviainen@jyu.fi}
\par\end{samepage}
  
\bigskip
\begin{samepage}
\noindent {\textsf{Jarkko Siltakoski \\  Department of Mathematics and Statistics\\  University of Jyv\"askyl\"a\\
FI-40014 Jyv\"askyl\"a, Finland}  \\
  \textsf{e-mail}: jarkko.siltakoski@jyu.fi }
\par\end{samepage}

}

\end{document}